\documentclass[reqno,10pt]{amsart}
\usepackage{graphicx, color}
\usepackage{amsmath}
\usepackage{amssymb}
\usepackage{amsthm}
\usepackage{enumitem}
\usepackage{bbm}
\usepackage[l2tabu,orthodox]{nag}
\usepackage[all,error]{onlyamsmath}
\usepackage[strict]{csquotes}
\usepackage{url}

\newtheorem{theorem}{Theorem}[section]
\newtheorem{lemma}[theorem]{Lemma}
\newtheorem{corollary}[theorem]{Corollary}

\theoremstyle{definition}

\newtheorem{remark}[theorem]{Remark}

\numberwithin{equation}{section}

\title{Discrete Space-Time Wave Kernels and Trace Identities on Regular Graphs}

\author{Amar Ba\v{s}i\'{c}}
\address{University of Sarajevo, Faculty of Electrical Engineering, Zmaja od Bosne bb, Sarajevo, 71000, Bosnia and Herzegovina}
\email{abasic@etf.unsa.ba}
\urladdr{ORCID: \url{https://orcid.org/0000-0002-3213-4527}}

\author{Lejla Smajlovi\'c}
\address{University of Sarajevo, Department of Mathematics and Computer Science, Zmaja od Bosne 33-35, Sarajevo, 71000, Bosnia and Herzegovina}
\address{University of Sarajevo, School of Economics and Business, Trg oslobodjenja - Alija Izetbegovi\'{c} 1, Sarajevo, 71000, Bosnia and Herzegovina}
\email{lejlas@pmf.unsa.ba}
\urladdr{ORCID: \url{https://orcid.org/0000-0002-2709-5535}}

\author{Zenan \v{S}abanac}
\address{University of Sarajevo, Department of Mathematics and Computer Science, Zmaja od Bosne 33-35, Sarajevo, 71000, Bosnia and Herzegovina}
\email{zsabanac@pmf.unsa.ba}
\urladdr{ORCID: \url{https://orcid.org/0000-0001-8122-496X}}

\subjclass[2020]{39A12, 05A19, 05C81, 33C10}

\keywords{discrete wave equation, discrete space-time wave kernels, regular graphs, graph Laplacian, non-backtracking walks, discrete Bessel functions, wave trace identities}

\begin{document}

\begin{abstract}
We study the discrete space-time wave equation on a $(q+1)$-regular graph $X$ associated with the affine Laplace-type operator. For the forward time-difference scheme we derive explicit formulas for the two fundamental solutions (wave kernels) in terms of discrete modified Bessel functions and the non-backtracking walk counts on $X$ thus  providing a direct and explicit link between wave propagation and combinatorial graph data. Utilizing uniqueness property of the wave kernel, we prove a new trace-type formula associated to the affine Laplace-type operator on $X$ and apply it to deduce many combinatorial identities. For example, we derive a closed-form expression for evaluation of some trigonometric sums twisted by an additive character as well as evaluations of finite sums of Chebyshev polynomials twisted by binomial coefficients. 
\end{abstract}

\maketitle

\section{Introduction and main results}  

The wave equation on a simple, regular graph provides a fundamental framework for studying discrete wave propagation, bridging classical continuum mechanics with discrete structures. Physically, this equation models the dynamics of vibrations in complex mechanical networks, acoustic waveguides, and quantum particles that transition across lattice structures \cite{BO11, Sm07}, where the traditional spatial derivative is replaced by the graph Laplacian operator \cite{Ch97, FT04} or its linear perturbation \cite{MS99, Pe25} so that the bottom of its spectrum is equal to zero. See also  \cite{Ch22} for a study of properties of solutions of the semilinear wave equation on networks and \cite{CNT23, TS16} for recent advances related to the wave equation on trees in continuous time.

 Space-time discretization is a standard procedure in physics. An interested reader is referred to \cite{tH16} and the references therein for motivation stemming from quantum gravitational considerations. More recently, discrete space-time wave equations are used in the study of graph neural networks \cite{Yue+25} (the so called \textit{graph wave networks}). The discretization of time in \cite{Yue+25} proved to be particularly useful for numerical computations that are shown to be constantly stable. There are, of course, different possibilities for time discretization, the most common ones being the forward difference as in \cite{BSS26, Pe25}, the backward difference \cite{BSS24, KS23} or the symmetric timescale difference, see \cite{CP94, GNHS25}.

\subsection{Our setup}

In this paper we study the (generalized) discrete space-time wave equation 
\begin{equation}\label{wave_eq_disc}
\Delta_X^{a,b}W(x_0,x;t)+\partial_t^2 W(x_0,x;t)=0.
\end{equation}
on a connected, undirected $(q+1)$-regular graph $X$ with a finite or countably infinite set of vertices $V(X)$. Here  
\begin{equation*}\label{gen_Laplace}
    \Delta_{X}^{a,b}:= b\Delta_X - aI,
\end{equation*}
is the generalization of the combinatorial Laplacian $\Delta_X$, acting on functions $f:V(X)\to\mathbb{R}$, by
\begin{equation*}
    \Delta_X f(x) = (q+1)\, f(x) - \sum_{x \sim y} f(y).
\end{equation*}
We assume that the real parameters $a,\,b$ satisfy $b\neq0$ and the spectral-edge condition
\begin{equation}\label{eq:edge}
a\leq \begin{cases}
0, & \text{if } b>0 \\
-|b|\max\{\delta_{X={\textrm{finite}}}\lambda_{\max}, (\sqrt{q}+1)^2\}, & \text{if } b<0,
\end{cases}
\end{equation}
where $\lambda_{\max}$ is the largest eigenvalue of $\Delta_X$ if $X$ is finite ($\delta_{X={\textrm{finite}}}=1$ if $X$ is finite and $0$, otherwise). Trivially, \eqref{eq:edge} ensures non-negativity of $\Delta_{X}^{a,b}$ on a finite graph. Moreover, when $X$ is $(q+1)$-regular tree $T_{q+1}$, \eqref{eq:edge} yields the inequality $b(q+1)-a\geq 2|b|\sqrt{q}$, which ensures non-negativity of  $\Delta_{X}^{a,b}$  on $T_{q+1}$. 

Furthermore, $\partial_t$, for $t\in\ \mathbb{N}_0$, denotes the forward difference operator
\[
\partial_t u(t)=u(t+1)-u(t) \quad \text{and} \quad \partial_t^2 u(t)=u(t+2)-2u(t+1)+u(t).
\]
The generalized Laplacian $\Delta_{X}^{a,b}$ specializes to Laplace-type operators studied in \cite{AMPS13, BL13, CRS07, DS-PB18, MS99, Pe25} for appropriately chosen  values of $a,\,b$ satisfying \eqref{eq:edge}.

The \textit{discrete time wave kernels} associated with the operator $\Delta_X^{a,b}$ on the graph $X$ are two fundamental solutions $W_X^{a,b},\,V_X^{a,b}: V(X)\times V(X)\times\mathbb N_0 \to \mathbb R$ of \eqref{wave_eq_disc} satisfying the following  initial conditions:
\begin{equation}\label{cond1}
W_{X}^{a,b}(x_0, x; 0) =
\begin{cases}
1, & \text{if } x = x_0, \\
0, & \text{if } x \neq x_0,
\end{cases} \text{   and   } \partial_t W_{X}^{a,b}(x_0, x; 0) = 0,\,\, \forall x\in V(X),
\end{equation}
and
\begin{equation}\label{cond on V}
V_{X}^{a,b}(x_0, x; 0) = 0  \text{   and   } \partial_t  V_{X}^{a,b}(x_0, x; 0) =
\begin{cases}
1, & \text{if } x = x_0, \\
0, & \text{if } x \neq x_0,
\end{cases} \,\, \forall x\in V(X).
\end{equation}

\subsection{Main results}

Our first main result is an explicit evaluation of the discrete time wave kernels on any $(q+1)$-regular graph in terms of the combinatorial graph data and the discrete $I$-Bessel functions. The second main result is a trace-type formula which stems from the uniqueness of the wave kernel, by identifying the discrete time spectral realization of the wave kernel with its geometric/combinatorial evaluation.

\subsubsection{Wave kernels on regular graphs}

Let us first introduce some notation. 
Fix a base vertex $x_0\in V(X)$. For any $x \in V(X)$, let $c_m(x)$ be the number of non-backtracking walks of length $m$ from $x_0$ to $x$, as defined in \cite[Section~2.1]{CHJSV}. For any $x\in V(X)$ let
\begin{equation*}\label{eq:def-bm01}
    b_0(x)=c_0(x),\qquad b_1(x)=c_1(x).
\end{equation*}
For $m\geq 2$, let
\begin{equation}\label{eq:def-bm}
    b_m(x) := c_m(x)+(1-q)\big(c_{m-2}(x)+c_{m-4}(x)+\dots+c_{*}(x)\big),
\end{equation}
where
\begin{equation}\label{eq:def-cstar}
    c_{*}(x) :=
    \begin{cases}
        c_0(x), & \text{if $m$ is even}, \\
        c_1(x), & \text{if $m$ is odd}.
    \end{cases}
\end{equation}
If $X$ is vertex-transitive, let $N_m(x_0)$ denote the number of non-backtracking walks of length $m$ from $x_0$ to itself without tails (meaning there is no immediate backtracking at the beginning or end of the walk, see \cite[Section~2.1]{CHJSV}). Our first main result is the following theorem.
\begin{theorem}\label{wavegraph}
Let $X$ be a $(q+1)$-regular graph with a fixed base point $x_0$. Let $a\in\mathbb{R}$ and $b\in\mathbb R \setminus \{0\}$ be such that \eqref{eq:edge} holds, and set $c_{a,b} = \dfrac{2b\sqrt{q}}{b(q+1) - a}$ and $d_{a,b}=b(q+1)-a$.
Then the discrete-time wave kernels $W_X^{a,b}(x_0, x; t)$ and $V_X^{a,b}(x_0, x; t)$, associated with the operator $\Delta_X^{a,b}$ and satisfying the initial conditions \eqref{cond1} and \eqref{cond on V}, respectively, are given for $x \in V(X)$ and $t \in \mathbb{N}_0$ by
\begin{equation}\label{eq:universal_cover}
 W_X^{a,b}(x_0, x; t) =
    \sum_{k=0}^{\lfloor \tfrac{t}{2} \rfloor}
    (-1)^{k} \,\binom{t}{2k} \, d_{a,b}^k
    \sum_{m=0}^{k} (-1)^{m} b_m(x) \, q^{-\tfrac{m}{2}} \, I_{m}^{c_{a,b}}(k),
\end{equation}
\begin{equation}\label{eq:universal_cover V}
    V_X^{a,b}(x_0, x; t) =
    \sum_{k=0}^{\lfloor \tfrac{t-1}{2} \rfloor}
    (-1)^{k} \,\binom{t}{2k+1} \, d_{a,b}^k
    \sum_{m=0}^{k}(-1)^{m} b_m(x) \, q^{-\tfrac{m}{2}} \, I_{m}^{c_{a,b}}(k),
\end{equation}
where $I_{m}^{c_{a,b}}$ denotes the discrete $I$-Bessel function defined by \eqref{eq:def-I-Bessel} below.

Furthermore, if $x = x_0$ (so that $|x|=0$) and $X$ is vertex-transitive, then for all $t \in \mathbb{N}_0$ we have
\begin{multline}\label{eq:universal_cover_a}
        W_X^{a,b}(x_0, x_0; t) =
    \sum_{k=0}^{\lfloor t/2\rfloor}
    (-1)^{k} \,\binom{t}{2k} \, d_{a,b}^k
    \bigg( \sum_{m=0}^{k} (-1)^m N_m(x_0) \, q^{-\frac{m}{2}} \, I_{m}^{c_{a,b}}(k)
    \\+ (1-q) \sum_{j=1}^{\lfloor k/2 \rfloor} q^{-j} I_{2j}^{c_{a,b}}(k) \bigg),
\end{multline}
and
\begin{multline}\label{eq:universal_cover_a V}
    V_X^{a,b}(x_0, x_0; t) =
    \sum_{k=0}^{\lfloor (t-1)/2 \rfloor}
    (-1)^{k} \,\binom{t}{2k+1} \, d_{a,b}^k
    \bigg( \sum_{m=0}^{k} (-1)^m N_m(x_0) \, q^{-\tfrac{m}{2}} \, I_{m}^{c_{a,b}}(k)
    \\ +(1-q) \sum_{j=1}^{\lfloor k/2 \rfloor} q^{-j} I_{2j}^{c_{a,b}}(k) \bigg).
\end{multline}
\end{theorem}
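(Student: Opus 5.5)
We solve the recursion directly and then identify each power of the operator combinatorially. Writing $\partial_t^2 W = W(t+2)-2W(t+1)+W(t)$, equation \eqref{wave_eq_disc} becomes the two-step linear recursion
\[
W(t+2)=2W(t+1)-(I+\Delta_X^{a,b})W(t),
\]
acting in the $x$-variable. With the initial data \eqref{cond1}, resp.\ \eqref{cond on V}, this determines the solution uniquely. Put $E=I-(\cdot)$-shift formalism aside and use the symbolic form: the characteristic roots are $1\pm i\sqrt{\Delta_X^{a,b}}$. Hence
\[
W(t)=\tfrac12\big[(1+iA)^t+(1-iA)^t\big]\delta_{x_0}=\sum_{k}(-1)^k\binom{t}{2k}(\Delta_X^{a,b})^k\delta_{x_0},
\]
\[
V(t)=\sum_k(-1)^k\binom{t}{2k+1}(\Delta_X^{a,b})^k\delta_{x_0},
\]
where $A^2=\Delta_X^{a,b}$. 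Only even powers of $A$ survive, so no square root is actually needed. We verify these two formulas directly from the recursion via Pascal's rule, together with the initial values $W(0)=\delta$, $W(1)=\delta$ and $V(0)=0$, $V(1)=\delta$. This settles the binomial structure of \eqref{eq:universal_cover} and \eqref{eq:universal_cover V}. It then remains to show that
\[
(\Delta_X^{a,b})^k\delta_{x_0}(x)=d_{a,b}^k\sum_{m=0}^k(-1)^m b_m(x)\,q^{-m/2}I_m^{c_{a,b}}(k).
\]

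For this, write $\Delta_X^{a,b}=d_{a,b}I-bA_X$ with $A_X$ the adjacency operator, so that
\[
(\Delta_X^{a,b})^k=d_{a,b}^k\,(I-\tfrac{c_{a,b}}{2}\,q^{-1/2}A_X)^k .
\]
Expand $A_X^n$ in non-backtracking operators. The standard relations $A_1=A_X$, $A_1^2=A_2+(q+1)I$, and $A_1A_m=A_{m+1}+qA_{m-1}$ give Chebyshev-type expressions. In normalized form, $q^{-m/2}A_m$ relates to $U_m(\tfrac{A}{2\sqrt q})-q^{-1}U_{m-2}$, and one checks that $U$-polynomials in $x=A_X/(2\sqrt q)$ expand through the combinations $b_m=c_m+(1-q)(c_{m-2}+\dots)$ of \eqref{eq:def-bm}. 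Concretely, the identity
\[
\sum_{j}(q^{-1/2}A_X)^{\text{Chebyshev}}\;\longleftrightarrow\;q^{-m/2}\big(A_m+(1-q)(A_{m-2}+\cdots)\big)=U_m(x)
\]
is what makes $b_m$ appear: $\big(A_m+(1-q)(A_{m-2}+\dots)\big)\delta_{x_0}(x)=b_m(x)$, and this operator equals $q^{m/2}U_m(A_X/(2\sqrt q))$, which we prove by induction using the three-term recursion. Then $(1-c\,x)^k$ is expanded in the basis $U_m(x)$, $m\le k$. Its coefficients should be exactly $(-1)^m I_m^{c}(k)$, given the definition \eqref{eq:def-I-Bessel} of the discrete $I$-Bessel function, which we expect to be this expansion coefficient (or a binomial/Chebyshev-orthogonality integral $\frac{2}{\pi}\int(1-c\cos\theta)^k\sin((m+1)\theta)\sin\theta\,d\theta$ in the $U$ normalization, or the analogous cosine form if the definition uses $T_m$, in which case the $b_m$-telescoping absorbs the difference $U_m-U_{m-2}=2T_m$). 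Matching the definition to the correct basis is the main obstacle. We will first compute the orthogonality integral and compare it with \eqref{eq:def-I-Bessel}, adjusting between the $T$ and $U$ bases via the telescoping sum in \eqref{eq:def-bm} if necessary.

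For the vertex-transitive diagonal case, set $x=x_0$. Then $c_m(x_0)$ counts closed non-backtracking walks, including those with tails. We use the known relation from \cite{CHJSV}: $c_m(x_0)$ minus the tailless count equals a combination that telescopes against $(1-q)(c_{m-2}+\dots)$, leaving
\[
b_m(x_0)=N_m(x_0)+(1-q)q^{m/2-1}\delta_{m\,\text{even},\,m\ge2}\cdot(\ldots).
\]
Precisely, we aim to show
\[
\sum_m(-1)^mb_m(x_0)q^{-m/2}I_m=\sum_m(-1)^mN_mq^{-m/2}I_m+(1-q)\sum_{j\ge1}q^{-j}I_{2j}.
\]
We verify this by writing closed walks with tails as a tail of length $\ell$ glued to a tailless cycle, and comparing generating functions in $u$ (equivalently, via the Ihara-type identity $\sum_m c_m(x_0)u^m$ versus $\sum N_mu^m$). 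Substituting the result into the previous formulas yields \eqref{eq:universal_cover_a} and \eqref{eq:universal_cover_a V}.
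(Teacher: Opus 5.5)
Your route differs from the paper's. The paper pushes the tree kernel \eqref{eq:tree_W} forward along the universal covering $T_{q+1}\to X$, uses that exactly $c_r(x)$ lifts of $x$ lie at distance $r$ from $\tilde x_0$, and regroups the coefficients of $I_m^{c_{a,b}}(k)$ into $b_m(x)$. You instead write the solution as a binomial transform of $(\Delta_X^{a,b})^k\delta_{x_0}$, and that part (Pascal-rule check, initial values) is correct. You then reduce everything to a polynomial identity in the adjacency operator. The gap is that you leave the step carrying the whole content of the theorem unresolved, and both identities you assert there are false.

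First, $B_m:=A_m+(1-q)(A_{m-2}+A_{m-4}+\cdots)$ is not $q^{m/2}U_m(A_X/(2\sqrt q))$. Already $B_2=A_X^2-(q+1)I+(1-q)I=A_X^2-2qI$, whereas $qU_2(A_X/(2\sqrt q))=A_X^2-qI$, so your induction fails at its base. It is the unweighted sum $S_m=A_m+A_{m-2}+\cdots$ that equals $q^{m/2}U_m$. Since $B_m=S_m-qS_{m-2}$ (equivalently, telescope your correct relation $q^{-m/2}A_m=U_m-q^{-1}U_{m-2}$ against the weights $1-q$), you get $B_m=q^{m/2}(U_m-U_{m-2})=2q^{m/2}T_m(A_X/(2\sqrt q))$ for $m\ge1$. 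This is Lemma~\ref{lemma:bm_from_cm}; its recurrence proof is purely algebraic, so it also holds for $B_m\delta_{x_0}$ on infinite $X$.

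Second, the coefficients of $(1-cx)^k$ in the $U_m$ basis are $(-1)^m\bigl(I_m^c(k)-I_{m+2}^c(k)\bigr)$, not $(-1)^mI_m^c(k)$. Your two false claims happen to combine into the correct final formula, but neither can be proved, so the argument as written establishes nothing.

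The matching you call the main obstacle follows directly from \eqref{eq:def-I-Bessel}. Expanding the binomial with $j=m+2\ell$ shows that $I_m^c(k)$ is the coefficient of $z^m$ in $\bigl(1+\tfrac c2(z+z^{-1})\bigr)^k$. Hence $(1+c\cos\theta)^k=I_0^c(k)+2\sum_{m=1}^k I_m^c(k)\cos m\theta$, i.e.
\[
(1-cx)^k=I_0^c(k)+2\sum_{m=1}^k(-1)^mI_m^c(k)\,T_m(x).
\]
With $\Delta_X^{a,b}=d_{a,b}\bigl(I-c_{a,b}A_X/(2\sqrt q)\bigr)$ this gives $(\Delta_X^{a,b})^k\delta_{x_0}=d_{a,b}^k\bigl(I_0^{c_{a,b}}(k)\delta_{x_0}+\sum_{m=1}^k(-1)^mq^{-m/2}I_m^{c_{a,b}}(k)B_m\delta_{x_0}\bigr)$. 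Then $B_m\delta_{x_0}(x)=b_m(x)$ yields \eqref{eq:universal_cover} and \eqref{eq:universal_cover V}, with no orthogonality integral needed.

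For the diagonal case your target identity is right, but the displayed relation with $q^{m/2-1}$ is not. What you need is \eqref{eq:relation_b_m}: $b_m(x_0)=N_m(x_0)+(1-q)$ for even $m\ge2$, and $b_m(x_0)=N_m(x_0)$ otherwise. A tail-gluing proof of this must genuinely use vertex-transitivity (the count of tailless cycles at the end of a tail that avoid the tail edge), so citing it is simpler.

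Once repaired, your route is more self-contained than the paper's. It needs neither the tree kernel nor the covering argument, works pointwise on infinite graphs, uses \eqref{eq:edge} only to ensure $d_{a,b}\neq0$, and proves the first equality in \eqref{eq. trace fla} directly. The paper's route, in exchange, explains the $(1-q)$-correction in $b_m$ geometrically, as the $\ell$-sum of the tree kernel collected over the fibre of $x$.
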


\subsubsection{Trace-type formula on regular graphs}

Trace formulas on regular graphs, motivated by the Selberg trace formula on hyperbolic spaces \cite{Se56} have been extensively studied in the past 40 years, see \cite{Ah87, GLL26, Mn07, TW03, VN93}. In this paper we obtain a "physical" pre-trace formula which is based on utilizing the uniqueness of bounded solutions to the discrete wave equation in timescale $\mathbb T =\mathbb Z$ (see \cite[Theorem 3.1.]{Slav17}, or \cite{ABEPT03} and \cite{CS26} for a general timescale analogue). The pre-trace formula is then refined using the explicit representation of the coefficients $b_m(x)$ obtained in Lemma \ref{lemma:bm_from_cm} to deduce the following theorem. 

\begin{theorem} \label{th_trace_formula}
Let $X$ be a finite $(q+1)$-regular graph on $N=|V(X)|$ vertices with a fixed base point $x_0$ and let $0=\lambda_1\leq \ldots \leq \lambda_N=\lambda_{\max}$ be the eigenvalues of the combinatorial Laplacian $\Delta_X$ with the corresponding orthonormal eigenfunctions $\psi_1,\ldots,\psi_N$. Let $a\in\mathbb{R}$ and $b\in\mathbb R \setminus \{0\}$ be such that  \eqref{eq:edge} holds true. Then, with the notation as above, for any positive integer $k$ and any $x\in V(X)$ we have the following identity: 
\begin{equation}\label{eq. trace fla}
\begin{split}
        \sum_{j=1}^N& 
\mu_j^k 
\psi_j(x_0)\overline{\psi_j(x)} = d_{a,b}^k \sum_{m=0}^{k} (-1)^{m} b_m(x) \, q^{-\tfrac{m}{2}} \, I_{m}^{c_{a,b}}(k)\\
&=d_{a,b}^k \left( \delta_{x_0=x}I_0^{c_{a,b}}(k) + 2 \sum_{m=1}^{k} (-1)^{m} \Bigl[T_m\left(\frac{A}{2\sqrt{q}}\right)\delta_{x_0}\Bigr](x)\, I_{m}^{c_{a,b}}(k)\right),
\end{split}
\end{equation}
where $\mu_j=b\lambda_j-a\geq 0$, $j=1,\ldots, N$, $A$ is the adjacency matrix of $X$, $T_m$ is the Chebyshev polynomial of the first kind and $\delta_{x_0}$ is the column vector of $N$ elements with all entries equal to $0$, except for the entry corresponding to the vertex $x_0$, which is equal to $1$.  For a column vector $\mathbf a$ and $x\in V(X)$, $\mathbf a(x)$ denotes the entry of $\mathbf a$ corresponding to the vertex $x$.

\noindent Moreover, if $X$ is vertex-transitive, then 
\begin{equation}\label{eq. trace vert trans}
 \sum_{j=1}^N \mu_j^k =d_{a,b}^k \left( NI_0^{c_{a,b}}(k) + 2 \sum_{m=1}^{k} (-1)^{m}\mathrm{Tr} \Bigl[T_m\left(\frac{A}{2\sqrt{q}}\right)\Bigr]\, I_{m}^{c_{a,b}}(k)\right).
\end{equation}
\end{theorem}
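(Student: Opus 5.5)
The plan is to compute $W_X^{a,b}(x_0,x;t)$ in two independent ways and compare them using uniqueness of bounded solutions.

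\emph{Spectral solution.} Since $X$ is finite, the eigenfunctions $\psi_j$ form an orthonormal basis. I would expand the solution as
\[
W(x_0,x;t)=\sum_{j=1}^N g_j(t)\psi_j(x_0)\overline{\psi_j(x)}.
\]
Substituting into \eqref{wave_eq_disc} reduces the problem to the scalar recursion
\[
g_j(t+2)-2g_j(t+1)+(1+\mu_j)g_j(t)=0,\qquad g_j(0)=1,\quad g_j(1)=1.
\]
The characteristic roots are $1\pm i\sqrt{\mu_j}$, so
\[
g_j(t)=\mathrm{Re}\,(1+i\sqrt{\mu_j})^t=\sum_{k=0}^{\lfloor t/2\rfloor}(-1)^k\binom{t}{2k}\mu_j^k .
\]
The initial conditions \eqref{cond1} then follow from the completeness relation $\sum_j\psi_j(x_0)\overline{\psi_j(x)}=\delta_{x_0=x}$. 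This gives
\[
W_X^{a,b}(x_0,x;t)=\sum_{k=0}^{\lfloor t/2\rfloor}(-1)^k\binom{t}{2k}\sum_j\mu_j^k\psi_j(x_0)\overline{\psi_j(x)} .
\]
Here $\mu_j\ge0$ by \eqref{eq:edge}.

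\emph{Comparison with Theorem \ref{wavegraph}.} Both this expression and \eqref{eq:universal_cover} solve the same second-order recursion in $t$ with the same data at $t=0,1$, so they coincide; this is the uniqueness statement cited from \cite{Slav17}, and for a forward recursion it is just induction on $t$. Both sides have the form
\[
\sum_{k}(-1)^k\binom{t}{2k}F(k),\qquad F(k)=\sum_j\mu_j^k\psi_j(x_0)\overline{\psi_j(x)}\ \text{ or }\ F(k)=d_{a,b}^k\sum_{m=0}^{k}(-1)^m b_m(x)q^{-m/2}I^{c_{a,b}}_m(k).
\]
I would then recover $F(k)$ by induction on $t$. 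For $t=2k$ and for $t=2k+1$ the new term carries coefficient $\binom{t}{2k}=1$ or $2k+1$, which is nonzero, and all lower terms are known by the induction hypothesis. This peeling gives $F_{\rm spec}(k)=F_{\rm comb}(k)$ for every $k$, which is the first equality in \eqref{eq. trace fla}. Alternatively, one can argue via the binomial transform: $\mathrm{Re}(1+i s)^t$ for all $t$ determines the values at all $k$.

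\emph{Second equality.} I would use Lemma \ref{lemma:bm_from_cm}, the explicit representation of $b_m$, which I expect to yield
\[
b_0(x)=\delta_{x_0=x},\qquad q^{-m/2}b_m(x)=2\Bigl[T_m\Bigl(\frac{A}{2\sqrt q}\Bigr)\delta_{x_0}\Bigr](x)\quad(m\ge1).
\]
This rests on the standard relations between non-backtracking matrices $A_m$ and Chebyshev polynomials:
\[
A_m=q^{m/2}U_m\Bigl(\frac{A}{2\sqrt q}\Bigr)-q^{m/2-1}U_{m-2}\Bigl(\frac{A}{2\sqrt q}\Bigr),
\]
together with the telescoping sum $c_m+(1-q)(c_{m-2}+\dots)$ and the identity $2T_m=U_m-U_{m-2}$. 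This step is the main obstacle. If the lemma does not state it exactly in this form, I would verify it by checking that both sides satisfy the same three-term recursion in $m$, namely $A_1A_m=A_{m+1}+qA_{m-1}$ for $m\ge2$, with special cases at $m=1,2$, and by matching the initial values.

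\emph{Vertex-transitive case.} Setting $x=x_0$ and summing over $x_0\in V(X)$ gives \eqref{eq. trace vert trans}. On the left, $\sum_{x_0}|\psi_j(x_0)|^2=1$. On the right, summing the diagonal entries $[T_m(\cdot)\delta_{x_0}](x_0)$ over $x_0$ produces the trace, and the $I_0$ term contributes $N$. Vertex-transitivity is not actually needed for this summation; it only explains why each diagonal term is equal to $\mathrm{Tr}/N$.
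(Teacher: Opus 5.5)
Your proposal is correct and follows essentially the same route as the paper. Both arguments identify the spectral solution $\frac12\sum_j[(1+i\sqrt{\mu_j})^t+(1-i\sqrt{\mu_j})^t]\psi_j(x_0)\overline{\psi_j(x)}$ with \eqref{eq:universal_cover} by uniqueness, invert the triangular relation in $t$ to match the coefficients of $(-1)^k\binom{t}{2k}$, apply Lemma~\ref{lemma:bm_from_cm}, and then sum the diagonal over $x_0$. Your two side remarks are also correct refinements of the paper's argument: uniqueness here is plain forward induction on $t$, so no boundedness is needed, and the summed identity \eqref{eq. trace vert trans} holds for every finite $(q+1)$-regular graph, since \eqref{eq. trace fla} holds for every base point.
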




\begin{remark}
 The trace-type formula \eqref{eq. trace fla} is of a similar type as Theorem C of \cite{GLL26}, meaning that it relates the spectrum of the Laplacian to the number of non-backtracking walks on a regular graph. Our trace formula does not involve a test function; however, it applies to a larger family of operators (generalized Laplacians $\Delta_X^{a,b}$). Moreover, it is of a different, "physical" origin - it stems from the wave propagation properties, so we view it as a physical trace-type formula. An interested reader is referred to \cite{CHJSV} for a trace formula stemming from the heat diffusion on a regular graph.   
\end{remark}

\subsection{Applications}

We present three applications of Theorem \ref{th_trace_formula}: to the discrete cycle on $n$ vertices, the complete graph on $n$ vertices and a $d$-dimensional Hamming cube. In all three cases, we deduce interesting combinatorial identities.

An application to the discrete cycle on $n$ vertices yields the following two identities.
For every $\ell\in\{1,\ldots,n-1\}$, every positive integer $k$, and every real number $c$, one has
\begin{equation}\label{bmCn_single_sum l}
    \frac{1}{n}\sum_{j=0}^{n-1}
e^{-2\pi i j\ell/n}
\left(
2c\sin^2\!\left(\frac{\pi j}{n}\right)+1-c
\right)^k  =
 \underset{n\mid m\pm \ell}{\sum_{m\in\{1,\ldots,k\}}} (-1)^m
I_m^{c}(k).
\end{equation}
Moreover,
\begin{equation}\label{eq:Cn_single_sum 0}
\frac{1}{n}\sum_{j=0}^{n-1}
\left(
2c\sin^2\!\left(\frac{\pi j}{n}\right)+1-c
\right)^k
=
I_0^{c}(k)
+
2\sum_{s=1}^{\lfloor k/n\rfloor}
(-1)^{s n}
I_{s n}^{c}(k).
\end{equation}

Trigonometric sums of the type \eqref{eq:Cn_single_sum 0} and \eqref{bmCn_single_sum l} appear in many physical and combinatorial settings. For example, they arise in describing a system of coupled oscillators under acceleration \cite{Ho19} or in one-dimensional Ising model with nearest neighbor interactions \cite[Chapter III]{MW14}. In this paper, we provide another "physical" interpretation/realization of these sums as a (pre-)trace of the wave kernel on a discrete cycle in discrete time which, in view of an explicit "geometric" expression for the kernel, yields their explicit evaluation.

In \cite[Remark 5.2]{BSS26}, it is shown that 
\begin{equation}\label{eq:I1-binomial}
   I_m^{1}(k)= 2^{-k}\binom{2k}{k-m}, \qquad 0\le m\le k.
\end{equation} 
Hence, \eqref{bmCn_single_sum l} and \eqref{eq:Cn_single_sum 0}, with $c=1$, coincide with the formula obtained in \cite[Corollary 11]{CJHSVcomb} for $\beta=0$,  and in this sense generalize the combinatorial results of \cite{dFK13,dFGK17,Me12}. See also \cite{JKS24} for a physical approach to secant and cosecant sums arising from heat diffusion.

Results similar to \eqref{eq:Cn_single_sum 0} and \eqref{bmCn_single_sum l} can be obtained by applying Theorem \ref{th_trace_formula} to any circulant graph. They will be studied in a follow-up paper.

An application to the complete graph on $n\geq 3$ vertices yields combinatorial identities relating Chebyshev polynomials and the discrete $I$-Bessel functions. For example, for every real number $c$, every $n\ge3$, and every $k \in \mathbb{N}$, we prove that
\begin{equation}\label{eq. kn illustration}
\sum_{m=1}^{k}
T_m\!\left(\frac{1}{2\sqrt{n-2}}\right)
I_m^{c}(k)= \frac{1}{2}\left(1+\frac{c}{2\sqrt{n-2}}\right)^k-\frac{1}{2}I_0^{c}(k).
\end{equation}
Taking $c=1$ in \eqref{eq. kn illustration}, using \eqref{eq:I1-binomial}, and multiplying by $2^{k+1}$ we obtain an identity
\begin{equation*}
2\sum_{m=1}^{k}
\binom{2k}{k-m}T_m\!\left(\frac{1}{2\sqrt{n-2}}\right)
= \left(2+\frac{1}{\sqrt{n-2}}\right)^k-\binom{2k}{k}.
\end{equation*}

Finally, when applying our results to the $d$-dimensional hypercube, we deduce identities involving Chebyshev polynomials, discrete $I$-Bessel functions and binomial coefficients. For example, Corollary~\ref{cor:Qd_twisted} below with $c=2(d-1)^{-1/2}$ yields the following identity, which holds for all $d\ge2$, $1\le r\le d$, and $k\in\mathbb N$:
\[
\sum_{j=0}^{d}
K_j(r;d)\left(\frac{2j-1}{d-1}\right)^k
=
2\sum_{m=1}^{k}
(-1)^m
\sum_{s=0}^{d}
K_s(r;d)
T_m\!\left(
\frac{d-2s}{2\sqrt{d-1}}
\right)
I_m^{\frac{2}{\sqrt{d-1}}}(k),
\]
where
\[
K_j(r;d)
=
\sum_{\ell=0}^{j}
(-1)^\ell
\binom{r}{\ell}
\binom{d-r}{j-\ell}
\]
is the $j$-th Krawtchouk polynomial. (Recall that, by definition, $\binom{n}{k}=0$ for all non-negative integers $k,\,n$ such that $k>n$.)

\section{Preliminaries}
In this section we introduce discrete Bessel functions, describe further notation needed in the sequel and prove a representation formula for the combinatorial quantities $b_m(x)$ related to numbers of non-backtracking walks of length $m$. 

\subsection{Discrete Bessel functions }
Discrete Bessel functions are introduced in \cite{BC} and further generalized and studied in \cite{BSS24, CHJSV, Cu15, CS26, KS23, Slav18} in discrete timescales, including forward and backward difference operators.  In this paper, we use the discrete $I$-Bessel function on the timescale $\mathbb Z$ associated with the forward difference operator.
This function can be expressed as
\begin{equation}\label{eq:def-I-Bessel}
I^{c}_{n}(t)
= \sum_{\ell=0}^{\lfloor (t-n)/2\rfloor}
\frac{t!}{\ell!\,(t-2\ell-n)!\,(n+\ell)!}
\left(\frac{c}{2}\right)^{2\ell+n},
\end{equation}
for $t,n\in\mathbb{N}_0$ with $n\le t$, and $c\in\mathbb{C}$. By convention, $I_n^c(t)=0$ for $n>t$.

We refer the interested reader to the above references for further properties of discrete Bessel functions. 

\subsection{Wave kernels on a $(q+1)$-regular tree}

The universal cover of any $(q+1)$-regular graph is a $(q+1)$-regular tree (also known as the Bethe lattice with coordination number $q+1$), which is a connected $(q+1)$-regular graph with no cycles. We denote such a tree by $T_{q+1}$.
In this section, we recall the main result of \cite{BSS26} in which the wave kernels on 
$T_{q+1}$ are explicitly computed. Given that $T_{q+1}$ is distance-transitive, the kernels depend only on the radial distance $r=d(x_0,x)=|x|$ from the root $x_0$. The solutions $W_{q+1}^{a,b}(r;t)$ and $V_{q+1}^{a,b}(r;t)$ of the wave equation \eqref{wave_eq_disc} satisfying the initial conditions \eqref{cond1} and \eqref{cond on V} are given by
\begin{multline}\label{eq:tree_W}
W_{q+1}^{a,b}(r;t)
=\sum_{k=0}^{\left\lfloor t/2 \right\rfloor}
(-1)^{k+r}\binom{t}{2k} d_{a,b}^k
\bigg(
q^{-\frac{r}{2}} I_r^{c_{a,b}}(k)\\
-(q-1)\sum_{\ell=1}^{\left\lfloor (k-r)/2 \right\rfloor}
q^{-\frac{r+2\ell}{2}} I_{r+2\ell}^{c_{a,b}}(k)
\bigg),
\end{multline}
and
\begin{multline*}\label{eq:tree_V}
V_{q+1}^{a,b}(r;t)
=\sum_{k=0}^{\left\lfloor (t-1)/2 \right\rfloor}
(-1)^{k+r}\binom{t}{2k+1} d_{a,b}^k
\bigg(
q^{-\frac{r}{2}} I_r^{c_{a,b}}(k)\\
-(q-1)\sum_{\ell=1}^{\left\lfloor (k-r)/2 \right\rfloor}
q^{-\frac{r+2\ell}{2}} I_{r+2\ell}^{c_{a,b}}(k)
\bigg),
\end{multline*}
where $c_{a,b} = \dfrac{2b\sqrt{q}}{b(q+1) - a}$ and $d_{a,b}=b(q+1)-a$. These expressions provide the fundamental building blocks for our analysis on general $(q+1)$-regular graphs.

\subsection{Graph coverings}

Recall that a graph $\widetilde X$ is called a covering graph of $X$ if there exists a map
\[
\pi:V(\widetilde X)\to V(X)
\]
which locally preserves adjacency: for every vertex $\widetilde x\in V(\widetilde X)$, the neighbors of $\widetilde x$ are mapped bijectively onto the neighbors of $\pi(\widetilde x)$ in $X$. 

In particular, if $X$ is a connected $(q+1)$-regular graph, then its universal cover is the $(q+1)$-regular tree $T_{q+1}$ with covering map
\[
\pi:V(T_{q+1})\to V(X),
\]
see sections 5 and 6 of \cite{CY99} for more details. 

Fix a lift $\widetilde x_0\in V(T_{q+1})$ of the base vertex $x_0\in V(X)$. The covering map $\pi$ preserves adjacency locally, and therefore, for every $r\ge0$ and every $x\in V(X)$, the number of vertices $\widetilde x\in\pi^{-1}(x)$ satisfying
\[
d_{T_{q+1}}(\widetilde x_0,\widetilde x)=r
\]
is equal to the number $c_r(x)$ of non-backtracking walks in $X$ from $x_0$ to $x$ of length $r$.

\subsection{A representation formula for $b_m(x)$}

The number $c_m(x)$ of non-backtracking walks of length $m$ from $x_0$ to $x$ is a well studied object in graph theory. Its generating function was deduced in \cite{Fr91}, the exponential generating function was deduced and studied in \cite{AGHN18} (see also \cite{AHNW24} for a corresponding weighted graph result and \cite{OS14} for the distributional result). However, despite an extensive literature search we could not find an expression for the coefficients $b_m(x)$, though it might be folklore in graph theory. For this reason, we derive an explicit expression for the coefficients $b_m(x)$ directly in terms of the adjacency operator and Chebyshev polynomials.

Recall that $\delta_{x_0}$ is the column vector of $|V(X)|$ elements with zeros at all positions except at the one corresponding to $x_0$, where it is equal to $1$, and that for any column vector $\mathbf a$ we denote by $\mathbf a(x)$ the entry corresponding to the vertex $x$. With this notation, we have the following lemma.


\begin{lemma}\label{lemma:bm_from_cm}
Let $X$ be a finite $(q+1)$-regular graph with adjacency matrix $A$ and a fixed root
$x_0\in V(X)$. Let $c_m(x)$ be the number of non-backtracking walks of length $m$
from $x_0$ to $x$. Then, for $m\ge1$
\begin{equation*}\label{eq. bm as Cheb}
b_m(x)
=
\left[
2q^{m/2}T_m\!\left(\frac{A}{2\sqrt q} \right)\delta_{x_0}
\right](x),
\end{equation*}
where $T_m(x)$ are Chebyshev polynomials of the first kind.
\end{lemma}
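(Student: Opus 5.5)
We will work with the non-backtracking walk matrices $A_m$, defined so that $(A_m)_{x_0,x}=c_m(x)$; equivalently $c_m(x)=[A_m\delta_{x_0}](x)$, using symmetry of $A_m$. These satisfy the classical recursion
\[
A_0=I,\quad A_1=A,\quad A_2=A^2-(q+1)I,\quad A_{m+1}=AA_m-qA_{m-1}\ (m\ge2).
\]
This follows by counting: extending a non-backtracking walk of length $m$ by any edge gives $AA_m$, and we subtract the $q\,A_{m-1}$ extensions that backtrack. For $m=1$ the correction is $(q+1)I$ instead of $qA_0$. Define $B_m$ to be the matrix whose $(x_0,x)$ entry is $b_m(x)$, namely $B_0=I$, $B_1=A$ and $B_m=A_m+(1-q)(A_{m-2}+A_{m-4}+\cdots)$. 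The claim is then $B_m=2q^{m/2}T_m(A/(2\sqrt q))$ for $m\ge1$. Set $P_m:=2q^{m/2}T_m(A/(2\sqrt q))$. From $T_{m+1}(y)=2yT_m(y)-T_{m-1}(y)$ we get
\[
P_{m+1}=AP_m-qP_{m-1},\qquad P_1=A,\qquad P_2=A^2-2qI,
\]
where $P_2$ comes from $T_2(y)=2y^2-1$.

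The plan is to show that $B_m$ obeys the same three-term recursion as $P_m$ for $m\ge2$, with the same initial values. Then induction gives $B_m=P_m$ for all $m\ge1$, and applying both sides to $\delta_{x_0}$ and reading off the entry at $x$ proves the lemma.

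\textbf{Initial values.} $B_1=A_1=A=P_1$. Also $B_2=A_2+(1-q)A_0=A^2-(q+1)I+(1-q)I=A^2-2qI=P_2$. Note the tail of the sum for even $m$ ends at $c_0$, and for odd $m$ at $c_1$, matching \eqref{eq:def-cstar}.

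\textbf{Recursion.} The clean way is to write $B_m=A_m+(1-q)S_{m-2}$, where $S_j=A_j+A_{j-2}+\cdots$ ends at $A_0$ or $A_1$ according to parity. The $S_j$ satisfy $S_j=A_j+S_{j-2}$. We need
\[
B_{m+1}-AB_m+qB_{m-1}=0 .
\]
Expanding, the $A$-terms give $A_{m+1}-AA_m+qA_{m-1}=0$ by the recursion for $m\ge2$. What remains is $(1-q)\big(S_{m-1}-AS_{m-2}+qS_{m-3}\big)$, with $S_{-1}=0$. Summing the non-backtracking recursion in steps of two telescopes this bracket down to its lowest terms. Those lowest terms are the corrections from the exceptional step $A_2=A^2-(q+1)I$ and from the endpoints $A_0,A_1$. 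They must be checked to cancel exactly, giving something like $S_{m-1}-AS_{m-2}+qS_{m-3}=0$. This uses that $A_2-AA_1+qA_0=-I$ is compensated by the boundary terms, separately for $m$ even and odd.

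\textbf{Main obstacle.} The bookkeeping of these boundary terms is the likely difficulty, and the parity split is where errors could creep in. As a fallback I would use generating functions. The known identity $\sum_m A_m u^m=(1-u^2)(I-Au+qu^2I)^{-1}$ (Ihara/Bartholdi) implies
\[
\sum_m B_m u^m=\Bigl(1+\tfrac{(1-q)u^2}{1-u^2}\Bigr)\sum_m A_m u^m=(1-qu^2)(I-Au+qu^2I)^{-1}.
\]
This equals $2\sum_m q^{m/2}T_m(A/(2\sqrt q))u^m-I$, by the Chebyshev generating function $\sum T_m(y)s^m=(1-ys)/(1-2ys+s^2)$ with $s=\sqrt q\,u$. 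Comparing coefficients for $m\ge1$ gives the result directly. I would likely present this generating-function argument, since it avoids the parity casework.
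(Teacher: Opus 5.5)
Your proposal is correct. Your first plan is the paper's proof: the same recurrence $P_m=AP_{m-1}-qP_{m-2}$, the same initial values $P_1=A$, $P_2=A^2-2qI$, and induction. The bookkeeping you left open does close. Set $R_n:=A_n-AA_{n-1}+qA_{n-2}$, so that $R_2=-I$ and $R_n=0$ for $n\ge3$. Then $S_{2p}-AS_{2p-1}+qS_{2p-2}=A_0+\sum_{i=1}^{p}R_{2i}=I-I=0$ for $p\ge1$. Likewise $S_{2p+1}-AS_{2p}+qS_{2p-1}=(A_1-AA_0)+\sum_{i=1}^{p}R_{2i+1}=0$ for $p\ge0$. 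So the $-I$ enters only for one parity, where the leftover $A_0$ absorbs it. This is exactly the ``collecting terms'' step that the paper asserts without displaying it.

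The generating-function argument you say you would actually present is a genuinely different route. Instead of proving a recurrence for $b_m$, you encode the walk recursion as $(I-Au+qu^2I)\sum_m A_mu^m=(1-u^2)I$. This is immediate from the recursion you already stated, so no citation to Ihara/Bartholdi is needed. You then note that the tail sums in the definition of $b_m$ amount to multiplying by $(1-qu^2)/(1-u^2)$, and compare with the Chebyshev generating function.

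That route gives a parity-free proof that handles the whole definition of $b_m$ at once. It also explains the $m=0$ exception ($B_0=I$ versus $P_0=2I$) as the subtracted $I$. The paper's recurrence route is more elementary, but it hides precisely the boundary cancellation you were worried about. Every matrix involved is a polynomial in $A$, so the formal power-series manipulations with matrix coefficients are legitimate.
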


\begin{proof}
For each $m\ge0$, let $\mathbf c_m=(c_m(x))_{x\in V(X)}$
denote the column vector whose $x$-th component is the number of non-backtracking walks of length $m$ from $x_0$ to $x$. Then
\[
\mathbf c_0=\delta_{x_0},
\qquad
\mathbf c_1=A\delta_{x_0}.
\]
Moreover, the vectors $\mathbf c_m$ satisfy the standard recurrence for
non-backtracking walks on a $(q+1)$-regular graph (see
\cite[Section~2.1]{CHJSV}):
\[
\mathbf c_2=A\mathbf c_1-(q+1)\mathbf c_0,
\]
and, for $m\ge3$,
\[
\mathbf c_m
=
A\mathbf c_{m-1}
-
q\mathbf c_{m-2}.
\]

\noindent
For $m\ge 0$, let $\mathbf b_m$ denote the column vector whose $x$-th component is $b_m(x)$.
Substituting the recurrence for $\mathbf c_m$ into the definition of
$\mathbf b_m$ and collecting terms gives
\[
\mathbf b_m
=
A\mathbf b_{m-1}
-
q\mathbf b_{m-2},
\qquad m\ge3,
\]
with initial values
\[
\mathbf b_1=A\delta_{x_0},
\qquad
\mathbf b_2=(A^2-2qI)\delta_{x_0}.
\]
On the other hand, using the classical recurrence relation $T_m(x)=2xT_{m-1}(x)-T_{m-2}(x)$  \cite[formula 8.941.1]{GR07} for the Chebyshev polynomials of the first kind with
$x=s/(2\sqrt q)$, we obtain that
\[
\begin{split}
P_m(s)
&:=2q^{m/2}T_m\!\left(\frac{s}{2\sqrt q}\right)=2q^{m/2}
\left[
2\frac{s}{2\sqrt q}
T_{m-1}\!\left(\frac{s}{2\sqrt q}\right)
-
T_{m-2}\!\left(\frac{s}{2\sqrt q}\right)
\right]\\
&=sP_{m-1}(s)-qP_{m-2}(s).
\end{split}
\]
Moreover, since $T_1(x)=x$ and $T_2(x)=2x^2-1$, we have
\[
P_1(s)=s,
\qquad
P_2(s)=s^2-2q.
\]
Substituting $s=A$ and multiplying by $\delta_{x_0}$ yields
\[
P_m(A)\delta_{x_0}
=
A\,P_{m-1}(A)\delta_{x_0}
-
q\,P_{m-2}(A)\delta_{x_0}.
\]
This proves that the sequence $\mathbf u_m:=P_m(A)\delta_{x_0}$ satisfies the recurrence
\[
\mathbf u_m
=
A\mathbf u_{m-1}
-
q\mathbf u_{m-2},\quad m\ge 3.
\]
Moreover,
\[
\mathbf u_1
=
P_1(A)\delta_{x_0}
=
A\delta_{x_0}
=
\mathbf b_1,
\]
and
\[
\mathbf u_2
=
P_2(A)\delta_{x_0}
=
(A^2-2qI)\delta_{x_0}
=
\mathbf b_2.
\]
Therefore the sequences $\{\mathbf u_m\}$ and $\{\mathbf b_m\}$
satisfy the same second-order linear recurrence and have identical
initial values. By uniqueness of solutions of the recurrence, it follows that
\[
\mathbf b_m=P_m(A)\delta_{x_0},
\qquad m\ge1.
\]
The proof is complete.
\end{proof}

For vertex-transitive graphs, combining Lemma~\ref{lemma:bm_from_cm} with a result
of \cite{CHJSV} yields the following formula.

\begin{corollary}\label{cor:Nm_from_bm}
Let $X$ be a finite vertex-transitive $(q+1)$-regular graph. Then, for every
$m\ge2$,
\begin{equation}\label{eq. Nm at x0}
N_m(x_0)
=
\left[
2q^{m/2}
T_m\!\left(\frac{A}{2\sqrt q}\right) \delta_{x_0}
\right](x_0)
+
(q-1)\frac{1+(-1)^m}{2}.
\end{equation}
Moreover, if $X$ is also bipartite then
\[
\operatorname{Tr}
T_{2m+1}\!\left(\frac{A}{2\sqrt q}\right)=0,
\]
for all non-negative integers $m$.
\end{corollary}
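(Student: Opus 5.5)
The plan is to express $N_m(x_0)$ through the numbers $c_m(x_0)$ of closed non-backtracking walks, pass to $b_m(x_0)$ through its definition \eqref{eq:def-bm}, and then apply Lemma~\ref{lemma:bm_from_cm} at $x=x_0$. Concretely, I would show that for a vertex-transitive $(q+1)$-regular graph and $m\ge 2$,
\[
b_m(x_0)=N_m(x_0)-(q-1)\frac{1+(-1)^m}{2}.
\]
Since Lemma~\ref{lemma:bm_from_cm} gives $b_m(x_0)=\bigl[2q^{m/2}T_m(A/(2\sqrt q))\delta_{x_0}\bigr](x_0)$, this is exactly \eqref{eq. Nm at x0}. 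It is also consistent with the passage from \eqref{eq:universal_cover} to \eqref{eq:universal_cover_a} in Theorem~\ref{wavegraph}, where the even-index coefficients shift by $1-q$ and the odd ones are unchanged.

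To get this relation I would use the relation from \cite{CHJSV} linking $c_m(x_0)$ to the tailless counts. Each closed non-backtracking walk of length $m$ at $x_0$ decomposes uniquely into a tail of length $j\ge 0$, a tailless closed walk of length $m-2j\ge 1$ at the endpoint of the tail, and the reversed tail. For $j\ge1$ there are $(q+1)q^{j-1}$ tails. The closed walk at the endpoint $y$ must avoid the last tail edge both at its start and at its end. By vertex-transitivity $N_{m-2j}(y)=N_{m-2j}(x_0)$, and the symmetry/edge-counting argument of \cite{CHJSV} shows that a proportion $(q-1)/(q+1)$ of these walks is admissible. This gives
\[
c_m(x_0)=N_m(x_0)+(q-1)\sum_{j\ge1,\; m-2j\ge1} q^{j-1}N_{m-2j}(x_0).
\]
The case $m-2j=0$ does not occur, because going out along a tail and straight back would backtrack. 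Also $c_0(x_0)=1$ and $c_1(x_0)=0$.

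Next I would substitute this into $b_m(x_0)=c_m(x_0)+(1-q)\sum_{i\ge1}c_{m-2i}(x_0)$ and verify the claimed identity. I would do this either by checking that both sides satisfy the same recursion in $m$, step $2$, or by comparing generating functions in $u^2$. The $N$-terms telescope because $(q-1)\sum_{j\ge1}q^{j-1}u^{2j}\bigl(1-(q-1)\sum_{i\ge 0}u^{2i}\bigr)$ must cancel against $(1-q)\sum_{i\ge1}u^{2i}$. What survives is the contribution of $c_0(x_0)=1$, which for even $m$ equals $1-q$. A sanity check at $m=2$: $c_2(x_0)=N_2(x_0)$, so $b_2(x_0)=N_2(x_0)+1-q$. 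I expect this bookkeeping, in particular getting the tail factor $(q-1)q^{j-1}$ exactly right and the resulting telescoping, to be the main obstacle.

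For the bipartite statement, the spectrum of $A$ is symmetric about $0$ when $X$ is bipartite. The polynomial $T_{2m+1}$ is odd, so the eigenvalues of $T_{2m+1}(A/(2\sqrt q))$ come in pairs $\pm$, and its trace vanishes.
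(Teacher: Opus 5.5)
\textbf{Part 1: same strategy, but a false counting step.} Your strategy for the first claim is the paper's. The paper quotes \cite[Lemma~2.3(3)]{CHJSV} for $b_m(x_0)=N_m(x_0)+(1-q)\frac{1+(-1)^m}{2}$ ($m\ge1$) and then applies Lemma~\ref{lemma:bm_from_cm} at $x=x_0$. You instead re-derive that relation from a tail decomposition.

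Your target identity $c_m(x_0)=N_m(x_0)+(q-1)\sum_{j\ge1,\,m-2j\ge1}q^{j-1}N_{m-2j}(x_0)$ is true for vertex-transitive graphs. The generating-function step then does give the relation, with one index fix: the factor must be $1-(q-1)\sum_{i\ge1}u^{2i}=(1-qu^2)/(1-u^2)$; with $\sum_{i\ge0}$ nothing cancels.

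The gap is your justification of that identity. It is false that, for each tail, a proportion $(q-1)/(q+1)$ of the $N_{m-2j}(y)$ tailless walks at its endpoint is admissible. That would require the number of tailless closed walks at $y$ leaving along a given edge to be the same for all $q+1$ edges at $y$. This is an arc-transitivity property, not a consequence of vertex-transitivity.

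Counterexample: take the prism $C_n\times K_2$, $n\ge5$ (vertex-transitive, $q=2$). Every vertex lies on exactly two $4$-cycles, and both contain its rung. So $N_4=4$ and $\frac{q-1}{q+1}N_4=\frac43$ is not even an integer. For $m=6$, $j=1$, the tail along the rung admits $0$ of the four walks, while each tail along a cycle edge admits $2$. Only the total $0+2+2=(q-1)N_4$ agrees with your formula.

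\textbf{How to fix it.} The count has to be done in aggregate over all tails of a given length.
\begin{itemize}
\item Write $F(y\to u)$ for the number of tailless closed non-backtracking walks of length $L$ at $y$ whose first step is $y\to u$.
\item For a tail ending with the step $u\to y$, the admissible walks number $N_L(y)-2F(y\to u)$. This uses reversal, and taillessness makes the two excluded sets disjoint.
\item Rotating a walk by one step is a bijection onto tailless closed walks at $u$ whose last step is $y\to u$. Hence $\sum_{y\sim u}F(y\to u)=N_L(u)$ and $F(y\to u)=F(u\to y)$.
\item Summing over tails of length $j$, the quantity $S_j=\sum F$ satisfies $S_1=N_L$ and $S_j=(q+1)q^{j-2}N_L-S_{j-1}$, so $S_j=q^{j-1}N_L$.
\item The admissible total is therefore $(q+1)q^{j-1}N_L-2S_j=(q-1)q^{j-1}N_L$. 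Vertex-transitivity is used only to make $N_L(\cdot)$ constant.
\end{itemize}
Alternatively, you can skip tails entirely. By vertex-transitivity, $|V(X)|N_m(x_0)=\operatorname{\textbf gp}_m$ and $b_m(x_0)=\frac{2q^{m/2}}{|V(X)|}\operatorname{Tr}T_m(A/(2\sqrt q))$, so the claim is exactly Mn\"ev's formula \eqref{geodesics}.

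\textbf{Bipartite part.} Your argument is correct and takes a genuinely different route. The paper derives it from $N_{2m+1}(x_0)=0$, the first part and \eqref{eq. trace mnev}, which relies on vertex-transitivity. Your use of spectral symmetry plus oddness of $T_{2m+1}$ works for every finite bipartite regular graph. The paper mentions this route only in a later remark about $Q_d$.
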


\begin{proof}
By \cite[Lemma~2.3(3)]{CHJSV}, one has $b_0(x_0)=N_0(x_0)=1$, and for $m\ge1$,
\begin{equation}\label{eq:relation_b_m}
b_m(x_0) =
\begin{cases}
    N_m(x_0) + (1 - q), & \text{if $m$ is even}, \\
    N_m(x_0), & \text{if $m$ is odd},
\end{cases}
\end{equation}
which is equivalent to
\[
N_m(x_0)
=
b_m(x_0)
+
(q-1)\frac{1+(-1)^m}{2},\quad m\geq 1.
\]
Now, \eqref{eq. Nm at x0} follows from Lemma~\ref{lemma:bm_from_cm} with $x=x_0$.

If $X$ is bipartite, every closed walk, and hence every closed non-backtracking walk without tails, on $X$ has
even length. Therefore $N_{2m+1}(x_0)=0$ for $m\ge0$. Since $X$ is vertex-transitive, every diagonal entry of
$T_m(A/(2\sqrt q))$ is the same, hence
\begin{equation}\label{eq. trace mnev}
\left[T_m\!\left(\frac{A}{2\sqrt q}\right)\delta_{x_0}\right](x_0)
=
\frac1{|V(X)|}
\operatorname{Tr}
T_m\!\left(\frac{A}{2\sqrt q}\right).
\end{equation}
Thus,  $N_{2m+1}(x_0)=0$ combined with \eqref{eq. Nm at x0} proves the second statement.
\end{proof}

\begin{remark}
For vertex-transitive graphs, Corollary~\ref{cor:Nm_from_bm} is consistent
with Corollary~2 of \cite{Mn07}. More precisely, Corollary~2 of \cite{Mn07} gives the total number $\operatorname{\textbf gp}_m$ of closed non-backtracking walks of length $m\ge 1$ without tails on a regular graph as
\begin{equation}\label{geodesics}
\begin{split}
\operatorname{\textbf gp}_m
&=
2q^{m/2}
\sum_{j=1}^{|V(X)|}
T_m\!\left(\frac{\theta_j}{2\sqrt q}\right)
+
\frac{1+(-1)^m}{2}(q-1)|V(X)|\\
&=2q^{m/2}\operatorname{Tr}
T_m\!\left(\frac{A}{2\sqrt q}\right)+
\frac{1+(-1)^m}{2}(q-1)|V(X)|,
\end{split}
\end{equation}
where $\theta_j$, $j=1,\ldots, |V(X)|$ are the eigenvalues of the adjacency matrix.

If the graph is vertex-transitive, for $m\ge 1$ we have that $\operatorname{\textbf gp}_m= |V(X)| N_m(x_0)$, hence \eqref{geodesics} may be rewritten as
\begin{equation*}\label{eq. N0 as trace}
N_m(x_0)
=
\frac{2q^{m/2}}{|V(X)|}
\operatorname{Tr}
T_m\!\left(\frac{A}{2\sqrt q}\right)
+
(q-1)\frac{1+(-1)^m}{2}.
\end{equation*}
In view of \eqref{eq. trace mnev}, we deduce that Mn\"ev's formula \eqref{geodesics} is equivalent to \eqref{eq. Nm at x0}.
\end{remark}

\begin{remark}
It is well known that formulas for the numbers of closed non-backtracking walks on regular graphs can also be obtained from the
Ihara--Bass determinant formula and the spectral theory of the
non-backtracking (Hashimoto) matrix; see, for example,
\cite{AFH15, FP23, GK21}. The approach developed in this section is different,
since it derives these quantities directly from the recurrence relation for non-backtracking walks and the resulting Chebyshev representation.
\end{remark}

\section{Proof of Theorem \ref{wavegraph}} \label{sec. proof main thm}

Let $X$ be any $(q+1)$-regular graph, and let $T_{q+1}$ denote its universal cover with the covering map $\pi$. Fix a lift $\tilde{x}_0\in T_{q+1}$ of the vertex $x_0\in X$. This covering map allows us to relate the wave kernels $W_X^{a,b}(x_0, x; t)$ and $V_X^{a,b}(x_0, x; t)$ on $X$ to the wave kernels on $T_{q+1}$ via
\begin{equation} \label{eq:cover_sum}
    W_X^{a,b}(x_0, x; t) = \sum_{\tilde{x} \in \pi^{-1}(x)} W^{a,b}_{q+1}(\tilde{x}_0, \tilde{x}; t), \quad  V_X^{a,b}(x_0, x; t) = \sum_{\tilde{x} \in \pi^{-1}(x)} V^{a,b}_{q+1}(\tilde{x}_0, \tilde{x}; t).
\end{equation}

The number of vertices $\tilde{x}\in\pi^{-1}(x)$ at distance $r$ from $\tilde{x}_0$ equals the number $c_r(x)$ of non-backtracking walks of length $r$ from $x_0$ to $x$ in $X$. In other words,   $W^{a,b}_{q+1}(\tilde{x}_0, \tilde{x}; t) = W^{a,b}_{q+1}(r; t)$  and $V^{a,b}_{q+1}(\tilde{x}_0, \tilde{x}; t) = V^{a,b}_{q+1}(r; t)$ for exactly $c_r(x)$ vertices $\tilde{x} \in \pi^{-1}(x)$, hence \eqref{eq:cover_sum} becomes
\begin{equation} \label{eq:series_form}
    W_X^{a,b}(x; t) = \sum_{r \geq 0} c_r(x) \, W^{a,b}_{q+1}(r; t), \quad  V_X^{a,b}(x; t) = \sum_{r \geq 0} c_r(x) \, V^{a,b}_{q+1}(r; t),
\end{equation}
where we write $W_X^{a,b}(x; t) := W_X^{a,b}(x_0, x; t)$ and $V_X^{a,b}(x; t) := V_X^{a,b}(x_0, x; t)$ since $x_0$ is fixed.
Note that the sum in \eqref{eq:series_form} is finite for each $t \in \mathbb{N}_0$, because $W^{a,b}_{q+1}(r; t)=V^{a,b}_{q+1}(r; t) = 0$ whenever $r > t$.

We now prove equation \eqref{eq:universal_cover}; the proof of \eqref{eq:universal_cover V} is analogous.
Equation \eqref{eq:tree_W} yields
\begin{multline*}
    W_X^{a,b}(x; t) = \sum_{r \geq 0} c_r(x) \sum_{k=0}^{\infty} (-1)^{k+r} \binom{t}{2k}d_{a,b}^k
    \bigg[ q^{-\tfrac{r}{2}} I_r^{c_{a,b}}(k) \\
    - (q-1) \sum_{\ell=1}^{\lfloor (k-r)/2 \rfloor} q^{-\tfrac{r+2\ell}{2}} I_{r+2\ell}^{c_{a,b}}(k) \bigg].
\end{multline*}
Fix $m \in \mathbb{N}_0$. For a given $k \ge m$, the term $I_m^{c_{a,b}}(k)$ in the above sum appears when $m = r$, as well as for all $\ell \in \{1, \dots, \lfloor (k-r)/2 \rfloor\}$ such that $r + 2\ell = m$.
This means that $I_m^{c_{a,b}}(k)$ collects contributions from walks of length $m$ that are either non-backtracking walks of length $m$ or are obtained from shorter non-backtracking walks by adding $2\ell$ extra steps.

Hence, the coefficient of $I_m^{c_{a,b}}(k)$ is given by
\[
    b_m(x) = c_m(x) + (1-q)\big( c_{m-2}(x) + c_{m-4}(x) + \dots + c_*(x) \big),
\]
where $b_m(x)$ and $c_*(x)$ are as in \eqref{eq:def-bm}–\eqref{eq:def-cstar}.

Finally, if $X$ is $(q+1)$-regular and vertex-transitive, combining \eqref{eq:universal_cover} and \eqref{eq:relation_b_m} yields \eqref{eq:universal_cover_a}. The proof of \eqref{eq:universal_cover_a V} is analogous, so we omit it.
\qed

\section{Proof of Theorem \ref{th_trace_formula}}

We start by recalling that, by the spectral theorem for self-adjoint operators (see, e.g.,
\cite[Chapter~1]{Ch97}), the graph Laplacian admits an orthonormal basis
of eigenfunctions $\{\psi_j\}$ with the corresponding eigenvalues
$\{\lambda_j\}$. Consequently, 
\[
\Delta_X^{a,b}
=
\sum_j (b\lambda_j-a)\,\psi_j\psi_j^T.
\]

For $x,y\in V(X)$ let us define

\begin{equation}\label{eq. wave spectral}
\mathcal W_X^{a,b}(x,y;t)
=
\frac12
\sum_j
\left[
\left(1+i\sqrt{\mu_j}\right)^t
+
\left(1-i\sqrt{\mu_j}\right)^t
\right]
\psi_j(x)\overline{\psi_j(y)}.
\end{equation}
(Recall that, by \eqref{eq:edge} we have $\mu_j\geq 0$, $j=1,\ldots,N$).
We have
\begin{equation*}
\begin{split}
    \partial_t^2 \mathcal W_X^{a,b}(x_0,x;t) =& - \frac12\sum_j\mu_j\left[
\left(1+i\sqrt{\mu_j}\right)^t
+
\left(1-i\sqrt{\mu_j}\right)^t
\right] \psi_j(x_0)\overline{\psi_j(x)}\\
&= -\Delta_X^{a,b}\mathcal W_X^{a,b}(x_0,x;t),
\end{split}
\end{equation*}
where we used the fact that $\psi_j$ is the  eigenfunction of $\Delta_X^{a,b}$ with the real eigenvalue $\mu_j$. Therefore, $\mathcal W_X^{a,b}(x_0,x;t)$ satisfies the wave equation \eqref{wave_eq_disc}. Moreover, $\mathcal W_X^{a,b}(x_0,x;0)= \sum\limits_j
\psi_j(x_0)\overline{\psi_j(x)}=\delta_{x_0=x}$. Trivially,
\begin{equation*}
\partial_t\mathcal W_X^{a,b}(x_0,x;t)\big|_{t=0} = \frac{i}2
\sum_j
\sqrt{\mu_j}\left[
\left(1+i\sqrt{\mu_j}\right)^0
-
\left(1-i\sqrt{\mu_j}\right)^0
\right]
\psi_j(x_0)\overline{\psi_j(x)}=0,
\end{equation*}
hence $\mathcal W_X^{a,b}(x_0,x;t)$ also fulfills initial conditions \eqref{cond1}. From the uniqueness of bounded solutions to the discrete wave equation in timescale $\mathbb T =\mathbb Z$ (see \cite[Theorem 3.1.]{Slav17}, or \cite{ABEPT03} and \cite{CS26} for a general timescale analogue) we deduce that $\mathcal W_X^{a,b}(x_0,x;t) = W_X^{a,b}(x_0,x;t)$ for all $x\in V(X)$.

By identifying \eqref{eq. wave spectral} with \eqref{eq:universal_cover} and applying the binomial theorem to the spectral side we get 
\begin{equation*}\label{trace_identity}
\begin{split}
    W_X^{a,b}(x_0,x;t)&=\sum_{k=0}^{\lfloor t/2\rfloor}
(-1)^k\binom{t}{2k}
\sum_{j=1}^{N}
\mu_j^k
\psi_j(x_0)\overline{\psi_j(x)}\\&=
    \sum_{k=0}^{\lfloor t/2 \rfloor}
    (-1)^{k} \,\binom{t}{2k} \, d_{a,b}^k
    \sum_{m=0}^{k} (-1)^{m} b_m(x) \, q^{-\tfrac{m}{2}} \, I_{m}^{c_{a,b}}(k),
\end{split}
\end{equation*}
for all integers $t\geq 0$. The above identity is the identity between two binomial transforms. In view of the fact that the binomial transform is an involution, applying Lemma \ref{lemma:bm_from_cm} for $m\geq 1$ and using that $b_0(x)=\delta_{x=x_0}$, we immediately deduce \eqref{eq. trace fla}. 

\medskip

When $X$ is vertex-transitive, we take $x = x_0$ in \eqref{eq. trace fla} and sum over all $x_0\in V(X)$. Given that the basis $\{\psi_j\}_{j=1}^N$ is orthonormal, the left-hand side of \eqref{eq. trace fla} becomes $\sum_{j=1}^N \mu_j^k$. Using \eqref{eq. trace mnev}, it is trivial to see that the sum over $x=x_0\in V(X)$ of the right-hand side of \eqref{eq. trace fla} equals the right-hand side of \eqref{eq. trace vert trans}. The proof is complete. \qed

\section{Applications of the trace-type formula}

In this section, we apply Theorem \ref{th_trace_formula} to three classes of finite vertex-transitive graphs: the cycle graph $C_n$, the complete graph $K_n$, and the $d$-dimensional hypercube $Q_d$. The spectral side of the trace type formula in those three cases is known. Using Lemma  \ref{lemma:bm_from_cm} we will be able to express coefficients $b_m(x)$ and $N_m(x_0)$ in a closed form and derive interesting identities.

\subsection{Cycle $C_n$}

For the cycle graph $C_n$ on $n$ vertices, the coefficients $b_m(x_\ell)$ and the numbers $N_m(x_0)$ can be obtained directly from the definition of non-backtracking walks. We label the vertices of $C_n$ by $x_j=j$, $
j=0,\ldots, n-1$. Trivially, $C_n$ is $2$-regular, hence $q=1$. A non-backtracking walk, once it chooses an initial direction, either clockwise or counterclockwise, is forced to continue in that direction, since the only alternative at each step would be an immediate backtracking move, which is forbidden.

Since $q=1$ for $C_n$, the correction term in \eqref{eq:def-bm} vanishes.
Thus, for every $x_\ell\in V(C_n)$, we have
\[
b_m(x_\ell)=c_m(x_\ell).
\]
Moreover, a non-backtracking walk from $x_0$ to $x_\ell$ is forced to move
in one of the two directions around the cycle. Hence
\begin{equation}\label{bm_Cn}
    b_m(x_\ell)=
\begin{cases}
1, & m=0,\ \ell=0,\\
0, & m=0,\ \ell\ne0,\\
\mathbf 1_{m\equiv \ell\;(\mathrm{mod}\,n)}
+
\mathbf 1_{m\equiv -\ell\;(\mathrm{mod}\,n)}, & m\ge 1.
\end{cases}
\end{equation}
In particular, for $x_\ell=x_0$, this gives the number of closed
non-backtracking walks based at $x_0$. Therefore, a closed non-backtracking
walk returns to $x_0$ if and only if its length is a multiple of $n$.
In that case there are exactly two such walks, one in each direction, and no others.
Together with the convention $N_0(x_0)=1$, we obtain
\begin{equation} \label{C_n_formula}
    N_m(x_0)=
    \begin{cases}
1, & m=0,\\
2, & n\mid m,\ m\ge 1,\\
0, & \text{otherwise}.
\end{cases}
\end{equation}
Note that, for $\ell=0$, the formula \eqref{bm_Cn} reduces exactly to \eqref{C_n_formula}; hence $b_m(x_0)=N_m(x_0)$ for all $m\ge0$.

The following corollary gives an explicit representation of the discrete-time wave kernel on $C_n$.

\begin{corollary}\label{cor:Cn}
The discrete-time wave kernel on $C_n$ satisfying the initial condition \eqref{cond1} for all $t\in\mathbb N_0$ and $a\in\mathbb R$, $b\in\mathbb R\setminus \{0\}$ satisfying \eqref{eq:edge} is given by
\begin{equation}\label{W_C_n}
\begin{split}
W_{C_n}^{a,b}(x_0,x_0;t)
=
\sum_{k=0}^{\lfloor t/2 \rfloor}
(-1)^k
\binom{t}{2k}
(2b-a)^k
\Bigg(
I_0^{\frac{2b}{2b-a}}(k)
+
2\sum_{j=1}^{\lfloor k/n \rfloor}
(-1)^{jn}
I_{jn}^{\frac{2b}{2b-a}}(k)
\Bigg),
\end{split}
\end{equation}
and
\begin{equation}\label{W_C_n at x_l}
\begin{split}
W_{C_n}^{a,b}(x_0,x_\ell;t)
=
\sum_{k=0}^{\lfloor t/2 \rfloor}
(-1)^k
\binom{t}{2k}
(2b-a)^k
\underset{n\mid m\pm \ell}{\sum_{m\in\{1,\ldots,k\}}}
(-1)^{m}
I_{m}^{\frac{2b}{2b-a}}(k),
\end{split}
\end{equation}
for $\ell\in\{1,\ldots,n-1\}$.
\end{corollary}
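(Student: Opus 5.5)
The plan is to specialize Theorem~\ref{wavegraph} to $X=C_n$. Here $q=1$, so $d_{a,b}=b(q+1)-a=2b-a$ and $c_{a,b}=2b\sqrt{q}/(b(q+1)-a)=2b/(2b-a)$. For the formula \eqref{eq:universal_cover} to be applicable we need \eqref{eq:edge}, which is assumed in the statement. We will then insert the explicit values of $b_m(x_\ell)$ from \eqref{bm_Cn}. With $q=1$ we have $q^{-m/2}=1$, so the inner sum in \eqref{eq:universal_cover} becomes
\begin{equation*}
\sum_{m=0}^{k}(-1)^m b_m(x_\ell)\, I_m^{\frac{2b}{2b-a}}(k).
\end{equation*}

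For $\ell\neq 0$ the $m=0$ term vanishes, because $b_0(x_\ell)=0$. For $m\ge1$, \eqref{bm_Cn} gives $b_m(x_\ell)=\mathbf 1_{n\mid m-\ell}+\mathbf 1_{n\mid m+\ell}$, so the inner sum is exactly the sum over $m\in\{1,\dots,k\}$ with $n\mid m\pm\ell$. This yields \eqref{W_C_n at x_l}.

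One detail needs care. If both congruences hold, i.e.\ $2\ell\equiv0 \pmod n$ (so $n$ is even and $\ell=n/2$), then $b_m=2$, and the term must be counted twice. We will state that the notation $\sum_{n\mid m\pm\ell}$ is read with multiplicity, summing over both signs, in accordance with \eqref{bm_Cn}. For $\ell\neq n/2$ the two index sets are disjoint and no ambiguity arises.

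For $\ell=0$ we use $b_0(x_0)=1$ and $b_m(x_0)=2\cdot\mathbf 1_{n\mid m}$ for $m\ge1$, equivalently \eqref{C_n_formula} together with $b_m(x_0)=N_m(x_0)$. Writing $m=jn$ with $1\le j\le\lfloor k/n\rfloor$ gives the inner sum $I_0(k)+2\sum_j(-1)^{jn}I_{jn}(k)$, and this is \eqref{W_C_n}. Alternatively one could use \eqref{eq:universal_cover_a}: there the correction term carries the factor $(1-q)=0$, so the result is the same.

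The step most likely to cause trouble is not analytic but bookkeeping. We must check that the counting \eqref{bm_Cn} is correct, including the degenerate cases. For $n=2$ the graph $C_2$ is not simple, so we assume $n\ge3$. We must also confirm that the multiplicity convention for $\ell=n/2$ is consistent with the notation in \eqref{W_C_n at x_l}. The rest is direct substitution.
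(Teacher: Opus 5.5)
Your proposal is correct and is essentially the paper's proof: you specialize Theorem~\ref{wavegraph} to $q=1$ (so $d_{a,b}=2b-a$, $c_{a,b}=2b/(2b-a)$ and $q^{-m/2}=1$), then substitute the walk counts \eqref{bm_Cn} and \eqref{C_n_formula}, using $b_0(x_\ell)=0$ for $\ell\neq0$. Your remark that, for even $n$ and $\ell=n/2$, the sum over $n\mid m\pm\ell$ must be read with multiplicity (since then $b_m(x_\ell)=2$) is a correct clarification that the paper leaves implicit.
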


\begin{proof}
Using
\eqref{C_n_formula} in Theorem~\ref{wavegraph} with $q=1$  yields \eqref{W_C_n}.

For $\ell\in\{1,\ldots,n-1\}$,
Theorem~\ref{wavegraph} gives
\[
W_{C_n}^{a,b}(x_0,x_\ell;t)
=
\sum_{k=0}^{\lfloor t/2 \rfloor}
(-1)^k
\binom{t}{2k}
(2b-a)^k
\sum_{m=0}^{k}
(-1)^m
b_m(x_\ell)
I_m^{\frac{2b}{2b-a}}(k).
\]
Since $\ell\ne0$, we have $b_0(x_\ell)=0$. For $m\ge1$, by
\eqref{bm_Cn},
only the indices $m$ that satisfy
$m\equiv \ell\pmod n$ or $m\equiv -\ell\pmod n$ contribute to the sum.
Therefore,
\begin{equation}\label{eq. bm circle sum}
\sum_{m=0}^{k}
(-1)^m
b_m(x_\ell)
I_m^{\frac{2b}{2b-a}}(k)
=
\underset{n\mid m\pm \ell}{\sum_{m\in\{1,\ldots,k\}}}
(-1)^m
I_m^{\frac{2b}{2b-a}}(k).
\end{equation}
Substituting this into the preceding identity yields
\eqref{W_C_n at x_l}.

\end{proof}

The explicit formulas obtained in Corollary~\ref{cor:Cn} may now be combined with the Theorem \ref{th_trace_formula} in order to derive spectral identities for the cycle graph $C_n$.

\begin{corollary} \label{cor. trig sums gen}
Let $C_n$ be the cycle graph and 
$\ell\in\{1,\ldots,n-1\}$. Then, for every $k\in\mathbb N_0$ and every real number $c$ the
identities \eqref{bmCn_single_sum l} and \eqref{eq:Cn_single_sum 0} hold. 

\end{corollary}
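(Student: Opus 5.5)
Apply Theorem~\ref{th_trace_formula}, in its first form \eqref{eq. trace fla}, to $X=C_n$ with $q=1$. For this we need the spectral data of $C_n$ and the values $b_m(x_\ell)$ computed in \eqref{bm_Cn}.

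\emph{Spectral data.} The Laplacian of $C_n$ has orthonormal eigenfunctions $\psi_j(x_m)=n^{-1/2}e^{2\pi i jm/n}$, $j=0,\dots,n-1$, with eigenvalues $\lambda_j=2-2\cos(2\pi j/n)=4\sin^2(\pi j/n)$. With $d_{a,b}=2b-a$ and $c=c_{a,b}=2b/(2b-a)$ we get
\[
\mu_j/d_{a,b}=\frac{4b\sin^2(\pi j/n)-a}{2b-a}=2c\sin^2(\pi j/n)+1-c,
\]
using $-a/(2b-a)=1-c$. Moreover
\[
\psi_j(x_0)\overline{\psi_j(x_\ell)}=n^{-1}e^{-2\pi i j\ell/n}.
\]
Dividing \eqref{eq. trace fla} by $d_{a,b}^k$ therefore turns the left-hand side into exactly the left-hand sides of \eqref{bmCn_single_sum l} and \eqref{eq:Cn_single_sum 0}.

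\emph{Combinatorial side.} Insert \eqref{bm_Cn} with $q=1$ into the first line of the right-hand side of \eqref{eq. trace fla}.
\begin{itemize}
\item For $\ell\neq0$, this gives \eqref{eq. bm circle sum}, and hence \eqref{bmCn_single_sum l}.
\item For $\ell=0$, $b_m(x_0)=2$ when $n\mid m$, $m\ge1$. Writing $m=sn$ with $s\le\lfloor k/n\rfloor$ gives \eqref{eq:Cn_single_sum 0}.
\end{itemize}
For $\ell\ne0$, \eqref{bm_Cn} gives $b_m=\mathbf 1_{m\equiv\ell}+\mathbf 1_{m\equiv-\ell}$. When $n=2\ell$ both conditions hold at once and the weight is $2$, so the notation ``$n\mid m\pm\ell$'' must be read as counting $m$ with multiplicity. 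I would note this convention explicitly.

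\emph{Main obstacle: extending to every real $c$.} So far the identity holds only for those $c$ that arise as $c_{a,b}$ with $(a,b)$ satisfying \eqref{eq:edge}. I would handle this by polynomiality.
\begin{itemize}
\item For fixed $k$, both sides are polynomials in $c$. The left-hand side is a polynomial of degree $\le k$ in $c$. On the right, $I_m^c(k)$ is a polynomial in $c$ by \eqref{eq:def-I-Bessel}.
\item It therefore suffices that the admissible values of $c_{a,b}$ form an infinite set.
\item Take $b>0$ and $a\le0$. Then $c_{a,b}=2b/(2b-a)$ ranges over all of $(0,1]$. For $b<0$ one gets further values, but they are not needed.
\item Two polynomials agreeing on $(0,1]$ coincide, so the identities hold for every real (even complex) $c$.
\end{itemize}

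\emph{The case $k=0$.} Theorem~\ref{th_trace_formula} is stated for positive $k$. For $k=0$ the left-hand sides equal $\delta_{\ell=0}$ by the orthogonality of characters, and the right-hand sides equal $I_0^c(0)=1$ or an empty sum $0$. So the case $k=0$ is checked directly.
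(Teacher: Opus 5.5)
Your proof is correct and follows the paper's route: specialize \eqref{eq. trace fla} to $C_n$ with $q=1$, substitute \eqref{bm_Cn} (or \eqref{C_n_formula} for $x=x_0$) with $c=2b/(2b-a)$, and extend to all real $c$ because both sides are polynomials in $c$ that agree on infinitely many admissible values. Your additions are valid refinements that the paper leaves implicit: the direct check of $k=0$, and reading $n\mid m\pm\ell$ with multiplicity two when $n=2\ell$, which is genuinely needed (for example, $n=4$, $\ell=2$, $k=2$, $c=1$).
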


\begin{proof}
For the cycle graph $C_n$, we have $|V(C_n)|=n$ and $q=1$. The normalized eigenfunctions of the combinatorial Laplacian are $\psi_j(x_r)=\frac1{\sqrt n}e^{2\pi i jr/n}$, $j,r=0,\ldots,n-1$,
hence
\[
\psi_j(x_0)\overline{\psi_j(x_\ell)}
=
\frac1n e^{-2\pi i j\ell/n}.
\]
Moreover, the eigenvalues of the combinatorial graph Laplacian on $C_n$ are
$\lambda_j=4\sin^2\!\left(\frac{\pi j}{n}\right)$, $j=0,\ldots,n-1$; hence the eigenvalues of $\Delta_{C_n}^{a,b}=b\Delta_{C_n}-aI$ are
\[
\mu_j=
4b\sin^2\!\left(\frac{\pi j}{n}\right)-a, \qquad j=0,\ldots,n-1.
\]
Let $a\in\mathbb R$ and $b\in\mathbb R\setminus\{0\}$ be such that
\eqref{eq:edge} holds. From \eqref{eq. trace fla} combined with \eqref{bm_Cn} and \eqref{eq. bm circle sum}, by setting $c=\frac{2b}{2b-a}$ we obtain \eqref{bmCn_single_sum l}.
The identity \eqref{eq:Cn_single_sum 0} follows from the same argument with
$x=x_0$, using \eqref{C_n_formula} in place of \eqref{bm_Cn}.

This proves \eqref{bmCn_single_sum l} and \eqref{eq:Cn_single_sum 0} for  $c=\frac{2b}{2b-a}$ where $a,\, b\neq 0$ satisfy \eqref{eq:edge}. In view of \eqref{eq:def-I-Bessel}, both sides of identities
\eqref{bmCn_single_sum l} and \eqref{eq:Cn_single_sum 0} are polynomials
in $c$ which are equal for infinitely many values of $c$, hence they are equal for all $c\in\mathbb R$ by the identity theorem for polynomials.
\end{proof}

Taking $c=1$ in \eqref{bmCn_single_sum l} and \eqref{eq:Cn_single_sum 0} and using \eqref{eq:I1-binomial} we immediately deduce the following corollary.

\begin{corollary}
For every $k\in\mathbb N$ and $\ell=1,\ldots,n-1$, one has
\[
\sum_{j=0}^{n-1}
e^{-2\pi i j\ell/n}
\sin^{2k}\!\left(\frac{\pi j}{n}\right)
=
\frac{n}{2^{2k}} \underset{n\mid m\pm \ell}{\sum_{m\in\{1,\ldots,k\}}}(-1)^m
\binom{2k}{k-m}.
\]
Moreover, for every $k\in\mathbb N$, one has
\[
\sum_{j=0}^{n-1}\sin^{2k}\!\left(\frac{\pi j}{n}\right)
=
\frac{n}{2^{2k}} 
\left(
\binom{2k}{k}
+
2\sum_{\ell=1}^{\lfloor k/n\rfloor}
(-1)^{\ell n}\binom{2k}{k-\ell n}
\right).
\]
\end{corollary}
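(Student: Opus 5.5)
The plan is to specialize \eqref{bmCn_single_sum l} and \eqref{eq:Cn_single_sum 0} of Corollary~\ref{cor. trig sums gen} to $c=1$ and then substitute \eqref{eq:I1-binomial}. With $c=1$ the left-hand base becomes
\[
2\sin^2\!\left(\frac{\pi j}{n}\right)+1-1=2\sin^2\!\left(\frac{\pi j}{n}\right),
\]
so its $k$-th power is $2^k\sin^{2k}(\pi j/n)$. Multiplying \eqref{bmCn_single_sum l} through by $n$ therefore turns its left-hand side into $n^{-1}\cdot n\cdot 2^k\sum_{j} e^{-2\pi i j\ell/n}\sin^{2k}(\pi j/n)$. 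The same computation, without the character, handles \eqref{eq:Cn_single_sum 0}.

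On the right-hand sides I will use \eqref{eq:I1-binomial}, namely $I_m^1(k)=2^{-k}\binom{2k}{k-m}$ for $0\le m\le k$. Every index that appears satisfies $m\le k$. In \eqref{bmCn_single_sum l} this holds because $m\in\{1,\ldots,k\}$. In \eqref{eq:Cn_single_sum 0} it holds because $sn\le k$ when $s\le\lfloor k/n\rfloor$. So the formula applies to every term, and each Bessel value contributes a factor $2^{-k}$ together with the binomial coefficient $\binom{2k}{k-m}$ or $\binom{2k}{k-sn}$. The $m=0$ term in \eqref{eq:Cn_single_sum 0} gives $I_0^1(k)=2^{-k}\binom{2k}{k}$.

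Dividing both sides by $2^k$ now leaves $\sum_j e^{-2\pi i j\ell/n}\sin^{2k}(\pi j/n)$ on the left. On the right it leaves $n\,2^{-2k}\sum(-1)^m\binom{2k}{k-m}$, summed over the same index set $\{m\in\{1,\ldots,k\}: n\mid m\pm\ell\}$. This is the first claimed identity. The second follows in exactly the same way, after renaming the summation index $s$ as $\ell$.

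The only point that needs care is the bookkeeping of the powers of $2$: a factor $2^k$ comes from the base and a factor $2^{-k}$ from each Bessel value, and together they produce the $2^{-2k}$ in the final formula. Beyond that bookkeeping I expect no real obstacle. Corollary~\ref{cor. trig sums gen} holds for every real $c$, so $c=1$ is admissible regardless of \eqref{eq:edge}. It is stated for $k\in\mathbb N_0$, which covers $k\in\mathbb N$.
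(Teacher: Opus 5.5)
Your proposal is correct and is exactly the paper's argument: specialize \eqref{bmCn_single_sum l} and \eqref{eq:Cn_single_sum 0} to $c=1$ and insert \eqref{eq:I1-binomial}. The factor $2^k$ from the base $2\sin^2(\pi j/n)$ and the factor $2^{-k}$ from each $I_m^1(k)$ then combine into the prefactor $n\,2^{-2k}$, and your check that every index satisfies $m\le k$ (so that \eqref{eq:I1-binomial} applies termwise) is the only bookkeeping required.
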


\subsection{Complete graph $K_n$}

Let $n\ge 3$. For the complete graph $K_n$ we have
\[
|V(K_n)|=n,
\qquad
\deg(x_0)=n-1,
\qquad
q=n-2.
\]
The adjacency matrix of $K_n$ is $A=J-I$, where $J$ denotes the $n\times n$ matrix with all entries equal to one and $I$ is the $n$-dimensional identity matrix. The spectrum of the adjacency matrix is given by
\[
\sigma(A)=\{n-1\ (\text{multiplicity }1),\ -1\ (\text{multiplicity }n-1)\};
\]
see, for example, \cite[Chapter~1]{Ch97}. 

In the following lemma we give explicit formulas for $b_m(x_\ell)$ and
$N_m(x_0)$ on the complete graph $K_n$ in terms of the Chebyshev polynomials $T_m$.
\begin{lemma}\label{cor:Kn_bm_Nm}
Let $K_n$ be the complete graph with vertices
$x_0,x_1,\ldots,x_{n-1}$. Then, $b_0(x_\ell)=1$ if and only if $\ell=0$ and equal to zero, otherwise. For $m\ge1$ we have that
\begin{equation}\label{eq. bm complete graph}
b_m(x_\ell)
=
\frac{2(n-2)^{m/2}}{n}
\left[
T_m\!\left(\frac{n-1}{2\sqrt{n-2}}\right)
+
\eta_\ell
T_m\!\left(-\frac{1}{2\sqrt{n-2}}\right)
\right],
\end{equation}
where
\[
\eta_\ell=
\begin{cases}
n-1, & \ell=0,\\
-1, & \ell=1,\ldots,n-1,
\end{cases}
\]
and
\begin{equation}\label{eq. n0 complete graph}
\begin{split}
N_m(x_0)
&=
\frac{2(n-2)^{m/2}}{n}
\left[
T_m\!\left(\frac{n-1}{2\sqrt{n-2}}\right)
+
(n-1)T_m\!\left(-\frac{1}{2\sqrt{n-2}}\right)
\right]  \\
&\qquad
+
\frac{1+(-1)^m}{2}(n-3).
\end{split}
\end{equation}
\end{lemma}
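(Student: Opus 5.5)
The plan is to apply Lemma~\ref{lemma:bm_from_cm} directly with $q=n-2$ and to compute $T_m\!\left(\frac{A}{2\sqrt{n-2}}\right)\delta_{x_0}$ using the spectral decomposition of $A=J-I$.

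\textbf{Spectral projections.} Write $P=\frac1n J$, the orthogonal projection onto constant vectors, and $I-P$, the projection onto the orthogonal complement. Then
\[
A=(n-1)P+(-1)(I-P).
\]
For any polynomial $f$ this gives
\[
f(A)=f(n-1)P+f(-1)(I-P).
\]
Take $f(s)=2q^{m/2}T_m\!\left(\frac{s}{2\sqrt q}\right)$ with $q=n-2$.

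\textbf{Reading off entries.} For $x=x_\ell$ we have $P\delta_{x_0}(x_\ell)=\frac1n$. We also have $(I-P)\delta_{x_0}(x_\ell)=\delta_{\ell=0}-\frac1n$. This equals $\frac{n-1}{n}$ when $\ell=0$ and $-\frac1n$ otherwise, i.e.\ it is $\frac{\eta_\ell}{n}$. Hence
\[
b_m(x_\ell)=\frac{2(n-2)^{m/2}}{n}\Bigl[T_m\!\left(\tfrac{n-1}{2\sqrt{n-2}}\right)+\eta_\ell\,T_m\!\left(-\tfrac{1}{2\sqrt{n-2}}\right)\Bigr],
\]
which is \eqref{eq. bm complete graph}. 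The statement about $b_0$ follows from $b_0(x)=c_0(x)=\delta_{x=x_0}$.

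\textbf{The count $N_m(x_0)$.} Since $K_n$ is vertex-transitive, Corollary~\ref{cor:Nm_from_bm} gives $N_m(x_0)=b_m(x_0)+(q-1)\frac{1+(-1)^m}{2}$ with $q-1=n-3$. Substituting the $\ell=0$ case yields \eqref{eq. n0 complete graph}. Corollary~\ref{cor:Nm_from_bm} is stated for $m\ge2$, but its proof via \eqref{eq:relation_b_m} covers $m\ge1$. As a sanity check at $m=1$, the parity term vanishes and $T_1$ gives $\frac{2}{n}\cdot\frac{(n-1)-(n-1)}{2}=0=N_1(x_0)$, as expected.

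There is no real obstacle here. The only care needed is the sign and normalization of the projection $(I-P)\delta_{x_0}$, which produces $\eta_\ell$.
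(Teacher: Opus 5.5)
Your proposal is correct and follows the paper's proof essentially verbatim: you use the spectral decomposition $A=(n-1)P_0-P_1$ with $P_0=\frac1nJ$, apply Lemma~\ref{lemma:bm_from_cm} with $q=n-2$, read off $P_0\delta_{x_0}(x_\ell)=\frac1n$ and $P_1\delta_{x_0}(x_\ell)=\delta_{\ell,0}-\frac1n$, and then use Corollary~\ref{cor:Nm_from_bm} for $N_m(x_0)$. The only cosmetic difference is at $m=1$: the paper applies the corollary for $m\ge2$ and checks $m=1$ separately via $N_1(x_0)=0$, whereas you note that the relation \eqref{eq:relation_b_m} already holds for $m\ge1$ (and you include the same check).
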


\begin{proof}
The spectral projection decomposition of the adjacency matrix $A$ is
\[
A=(n-1)P_0-P_1,
\]
where
\[
P_0=\frac1nJ,
\qquad
P_1=I-\frac1nJ
\]
are orthogonal projection matrices. Hence, for
any polynomial $p$,
\[
p(A)=p(n-1)P_0+p(-1)P_1.
\]
By Lemma~\ref{lemma:bm_from_cm} we have 
\begin{align*}
b_m(x)&=
\left[
2(n-2)^{m/2}T_m\!\left(\frac{A}{2\sqrt{n-2}} \right)\delta_{x_0}
\right](x)\\&=2(n-2)^{m/2}\left[
T_m\!\left(\frac{n-1}{2\sqrt{n-2}} \right)P_0\delta_{x_0} + T_m\!\left(\frac{-1}{2\sqrt{n-2}} \right)P_1\delta_{x_0}
\right](x).
\end{align*}
Equation \eqref{eq. bm complete graph} follows by observing that
\[
P_0\delta_{x_0}(x_\ell)=\frac1n,
\qquad
P_1\delta_{x_0}(x_\ell)=\delta_{\ell,0}-\frac1n.
\]
Now we deduce \eqref{eq. n0 complete graph}. Taking $\ell=0$ in \eqref{eq. bm complete graph} gives
\[
b_m(x_0)
=
\frac{2(n-2)^{m/2}}{n}
\left[
T_m\!\left(\frac{n-1}{2\sqrt{n-2}}\right)
+
(n-1)T_m\!\left(-\frac{1}{2\sqrt{n-2}}\right)
\right].
\]
Combining this with Corollary~\ref{cor:Nm_from_bm} with $q=n-2$ yields \eqref{eq. n0 complete graph} for $m\ge2$. For $m=1$, the right-hand side of \eqref{eq. n0 complete graph} becomes
\[
\begin{split}
\frac{2\sqrt{n-2}}{n}
\left[
\frac{n-1}{2\sqrt{n-2}}
+
(n-1)\left(-\frac{1}{2\sqrt{n-2}}\right)
\right]
=0.
\end{split}
\]
Given that there are no closed non-backtracking walks of
length $1$ in $K_n$, $N_1(x_0)=0$, hence the identity \eqref{eq. n0 complete graph} holds true for all $m\ge1$.
\end{proof}

Using the eigenfunctions of the combinatorial Laplacian on $K_n$, Lemma~\ref{cor:Kn_bm_Nm} yields the following identities giving closed summation formulas for sums of products of Chebyshev polynomials and discrete $I$-Bessel functions.

\begin{corollary}\label{cor:Kn_c_identities}
Let $n\ge3$. Then, for every $k\in\mathbb N_0$ and every real number $c$, one has
\begin{equation}\label{eq:Kn_first_identity}
\begin{split}
\sum_{m=1}^{k}
(-1)^m
&\left(
T_m\!\left(\frac{n-1}{2\sqrt{n-2}}\right)
-
T_m\!\left(-\frac{1}{2\sqrt{n-2}}\right)
\right)
I_m^{c}(k)\\
&\qquad =
\frac12
\left[
\left(1-\frac{(n-1)c}{2\sqrt{n-2}}\right)^k
-
\left(1+\frac{c}{2\sqrt{n-2}}\right)^k
\right],
\end{split}
\end{equation}
and 
\begin{equation}\label{eq:Kn_second_identity}
\begin{split}
\sum_{m=1}^{k}&
(-1)^m
\left(
T_m\!\left(\frac{n-1}{2\sqrt{n-2}}\right)
+
(n-1)T_m\!\left(-\frac{1}{2\sqrt{n-2}}\right)
\right)
I_m^{c}(k)\\
&\quad =
\frac12
\left[
\left(1-\frac{(n-1)c}{2\sqrt{n-2}}\right)^k
+
(n-1)
\left(1+\frac{c}{2\sqrt{n-2}}\right)^k
\right]
-\frac{n}{2}I_0^c(k).
\end{split}
\end{equation}
\end{corollary}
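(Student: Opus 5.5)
The plan is to apply the trace-type formula \eqref{eq. trace fla} of Theorem~\ref{th_trace_formula} to $X=K_n$, with $q=n-2$ and $d_{a,b}=b(n-1)-a$. We do this once at a vertex $x=x_\ell$ with $\ell\neq 0$ and once at $x=x_0$. The coefficients $b_m(x_\ell)$ are supplied by Lemma~\ref{cor:Kn_bm_Nm}.

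For the spectral side we use the spectrum of $\Delta_{K_n}=(n-1)I-A$. It is $0$ with the constant eigenfunction $\psi_1=n^{-1/2}\mathbf 1$, and $n$ with multiplicity $n-1$ on the orthogonal complement. Hence
\[
\sum_j \mu_j^k\psi_j(x_0)\overline{\psi_j(x_\ell)}=(-a)^k[P_0]_{x_0x_\ell}+(bn-a)^k[P_1]_{x_0x_\ell}.
\]
For $\ell\ne0$ this equals $\frac1n\bigl((-a)^k-(bn-a)^k\bigr)$. For $\ell=0$ it equals $\frac1n\bigl((-a)^k+(n-1)(bn-a)^k\bigr)$. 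We then divide by $d_{a,b}^k$ and write everything in terms of $c=c_{a,b}=2b\sqrt{n-2}/d_{a,b}$. The two key computations are
\[
\frac{-a}{d_{a,b}}=1-\frac{b(n-1)}{d_{a,b}}=1-\frac{(n-1)c}{2\sqrt{n-2}},\qquad
\frac{bn-a}{d_{a,b}}=1+\frac{b}{d_{a,b}}=1+\frac{c}{2\sqrt{n-2}}.
\]

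On the geometric side we insert \eqref{eq. bm complete graph}. The factors $(n-2)^{m/2}$ cancel against $q^{-m/2}$, which leaves $\frac{2}{n}\bigl[T_m(\tfrac{n-1}{2\sqrt{n-2}})+\eta_\ell T_m(-\tfrac1{2\sqrt{n-2}})\bigr]$ multiplying $(-1)^mI_m^c(k)$. The $m=0$ term contributes $\delta_{\ell,0}I_0^c(k)$.

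For $\ell\neq0$ we have $\eta_\ell=-1$ and no $I_0$ term. Multiplying by $n/2$ gives \eqref{eq:Kn_first_identity} directly. For $\ell=0$ we have $\eta_0=n-1$ and an extra term $I_0^c(k)$; multiplying by $n/2$ and moving $\frac n2 I_0^c(k)$ to the right gives \eqref{eq:Kn_second_identity}. For $k=0$ both identities are trivial: the left sides are empty sums, and the right sides vanish since $I_0^c(0)=1$.

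So far these identities hold only for $c=c_{a,b}$ with $(a,b)$ satisfying \eqref{eq:edge}. To remove this restriction, note that taking $b>0$ and $a\le 0$ makes $c_{a,b}$ range over the whole interval $(0,\frac{2\sqrt{n-2}}{n-1}]$, which is an infinite set. By \eqref{eq:def-I-Bessel}, both sides are polynomials in $c$. The polynomial identity theorem then extends both identities to all real $c$, exactly as in the proof of Corollary~\ref{cor. trig sums gen}.

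The main obstacle is keeping the normalizations consistent. One must check that the projection entries produce exactly the $\frac1n$ weights matching $\eta_\ell$, and that the rescaled eigenvalues come out as the two displayed expressions $1\mp\cdots$; a sign slip in $-a/d_{a,b}$ would spoil the match.
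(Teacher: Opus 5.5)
Your proposal is correct and follows the paper's proof essentially step for step. You apply \eqref{eq. trace fla} on $K_n$ at $x_\ell$ with $\ell\neq0$ and at $x_0$, insert $b_m(x_\ell)$ from Lemma~\ref{cor:Kn_bm_Nm}, rewrite $-a$ and $bn-a$ in terms of $c=c_{a,b}$, and extend to all real $c$ by the polynomial identity theorem. The only differences are cosmetic: you evaluate the spectral side with the projections $P_0,P_1$ rather than the Fourier eigenbasis and the character sum $\sum_{j=1}^{n-1}e^{-2\pi i j\ell/n}=-1$. You also check $k=0$ explicitly and exhibit the infinite range $(0,\tfrac{2\sqrt{n-2}}{n-1}]$ of admissible $c$, both of which the paper leaves implicit.
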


\begin{proof}
An orthonormal basis of eigenfunctions of the combinatorial Laplacian on
$K_n$ is given by
\[
\psi_j(x_\ell)=\frac1{\sqrt n}e^{2\pi i j\ell/n},
\qquad j,\ell=0,\ldots,n-1.
\]
Here $\psi_0$ corresponds to the eigenvalue $0$, while
$\psi_j$, $j=1,\ldots,n-1$, correspond to the eigenvalue $n$. Hence the
eigenvalues of $\Delta_{K_n}^{a,b}=b\Delta_{K_n}-aI$ are
\[
\mu_0=-a,\qquad
\mu_j=bn-a,\quad j=1,\ldots,n-1.
\]
Moreover,
\[
\psi_j(x_0)\overline{\psi_j(x_\ell)}
=
\frac1n e^{-2\pi i j\ell/n}.
\]
Substituting this into \eqref{eq. trace fla} with $x=x_\ell$ and $q=n-2$ yields that
for every $k\in\mathbb N_0$, $a\in\mathbb R$, $b\in\mathbb R\setminus \{0\}$ satisfying \eqref{eq:edge} and $\ell=0,\ldots,n-1$,
\[
\sum_{j=0}^{n-1}
e^{-2\pi i j\ell/n}\mu_j^k=n(b(n-1)-a)^k
\sum_{m=0}^{k}
(-1)^m
b_m(x_\ell)(n-2)^{-m/2}
I_m^{c_{a,b}(n)}(k).
\]
Substituting the coefficients $b_m(x_\ell)$ from \eqref{eq. bm complete graph}
into this identity and simplifying gives that
\begin{multline}\label{eq. ident for kn}
\sum_{j=0}^{n-1}
e^{-2\pi i j\ell/n}
\mu_j^k
=
n(b(n-1)-a)^k
\Bigg[
\delta_{\ell,0} I_0^{c_{a,b}(n)}(k) \\
\quad+
\frac{2}{n}
\sum_{m=1}^{k}
(-1)^m
\left(
T_m\!\left(\frac{n-1}{2\sqrt{n-2}}\right)
+
\eta_\ell
T_m\!\left(-\frac{1}{2\sqrt{n-2}}\right)
\right)
I_m^{c_{a,b}(n)}(k)
\Bigg].
\end{multline}
For $\ell\in\{1,\ldots,n-1\}$, we have $\eta_\ell=-1$. Moreover,
\[
\sum_{j=1}^{n-1}e^{-2\pi ij\ell/n}
=-1.
\]
Since $\mu_0=-a$, $\mu_j=bn-a$ for $j=1,\ldots,n-1$, it follows that
\[
\sum_{j=0}^{n-1}
e^{-2\pi ij\ell/n}\mu_j^k
=
(-a)^k-(bn-a)^k.
\]
Taking $\ell\in\{1,\ldots,n-1\}$ in \eqref{eq. ident for kn}, we obtain
\[
\sum_{m=1}^{k}
(-1)^m
\left(
T_m\!\left(\frac{n-1}{2\sqrt{n-2}}\right)
-
T_m\!\left(-\frac{1}{2\sqrt{n-2}}\right)
\right)
I_m^{c_{a,b}(n)}(k)
=
\frac{(-a)^k-(bn-a)^k}{2(b(n-1)-a)^k}.
\]
Taking $\ell=0$ in \eqref{eq. ident for kn} and observing that $\eta_0=n-1$, and
\[
\sum_{j=0}^{n-1}\mu_j^k
=
(-a)^k+(n-1)(bn-a)^k,
\]
we obtain
\begin{multline*}
\sum_{m=1}^{k}
(-1)^m
\left(
T_m\!\left(\frac{n-1}{2\sqrt{n-2}}\right)
+
(n-1)T_m\!\left(-\frac{1}{2\sqrt{n-2}}\right)
\right)
I_m^{c_{a,b}(n)}(k)\\
=
\frac{(-a)^k+(n-1)(bn-a)^k}{
2(b(n-1)-a)^k}
-\frac{n}{2}I_0^{c_{a,b}(n)}(k).
\end{multline*}
Now put $D=b(n-1)-a$ and $c=\frac{2b\sqrt{n-2}}{D}$. Then
\[
-a
=
D\left(1-\frac{(n-1)c}{2\sqrt{n-2}}\right),
\qquad
bn-a
=
D\left(1+\frac{c}{2\sqrt{n-2}}\right).
\]
Substituting these relations into the last two displayed identities gives the stated formulas valid in the range of $c$ for which $a,\, b$ satisfy \eqref{eq:edge}. This range contains infinitely many $c$, and hence extension to all real $c$ follows from the fact that both sides of equations \eqref{eq:Kn_first_identity} and \eqref{eq:Kn_second_identity}  are polynomials in $c$.
\end{proof}
\noindent
\textbf{Proof of \eqref{eq. kn illustration}}.
The identity  follows by subtracting
\eqref{eq:Kn_first_identity} from \eqref{eq:Kn_second_identity}, dividing by
$n$, and using that $T_m(-x)=(-1)^mT_m(x)$.

\subsection{$d$-dimensional hypercube $Q_d$}

Let $d\ge 2$. The $d$-dimensional hypercube $Q_d$ is a $d$-regular graph with $|V(Q_d)|=2^d$ vertices. We identify its vertex set with $\{0,1\}^d$. The eigenvalues of the combinatorial Laplacian on $Q_d$ are $\lambda_j=2j$, $ j=0,1,\ldots,d$, with multiplicity $\binom{d}{j}$. Hence the eigenvalues of $\Delta_{Q_d}^{a,b}=b\Delta_{Q_d}-aI$ are
\[
2bj-a,
\qquad j=0,1,\ldots,d,
\]
with multiplicity $\binom{d}{j}$.

We fix $x_0=(0,\ldots,0)$. In the following lemma, for all $x\in V(Q_d)$ we explicitly compute the combinatorial quantities $b_m(x)$ and $N_m(x_0)$.

\begin{lemma}\label{cor:Qd_bm_Nm}
Let $Q_d$ be the $d$-dimensional hypercube. For $x\in V(Q_d)$, let $r=d(x_0,x)=|x|$.
For every $m\ge1$, 
\begin{equation}\label{bm qd}
b_m(x)
=
\frac{(d-1)^{m/2}}{2^{d-1}}
\sum_{j=0}^{d}
K_j(r;d)
T_m\!\left(
\frac{d-2j}{2\sqrt{d-1}}
\right),
\end{equation}
where $K_j(r;d)$ is the $j$-th Krawtchouk polynomial.

In particular, for every $m\ge 1$, the number $N_m(x_0)$ of closed
non-backtracking walks of length $m$ without tails starting at $x_0$ is
\begin{equation}\label{Q_d_formula}
N_m(x_0) =
\frac{(d-1)^{m/2}}{2^{d-1}}
\sum_{j=0}^{d}
\binom{d}{j}
T_m\!\left(
\frac{d-2j}{2\sqrt{d-1}}
\right) 
+
\frac{1+(-1)^m}{2}(d-2).
\end{equation}
\end{lemma}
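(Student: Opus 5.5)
The plan follows the same route as the proof of Lemma~\ref{cor:Kn_bm_Nm}: combine Lemma~\ref{lemma:bm_from_cm} with the spectral decomposition of the adjacency matrix of $Q_d$, here given by the characters of $\{0,1\}^d$. Here $q=d-1$, and $A$ has orthonormal eigenvectors
\[
\chi_y(x)=2^{-d/2}(-1)^{x\cdot y},\qquad y\in\{0,1\}^d,
\]
with eigenvalue $d-2|y|$. For any polynomial $p$ this gives
\[
[p(A)\delta_{x_0}](x)=\sum_{y}p(d-2|y|)\chi_y(x_0)\chi_y(x)=2^{-d}\sum_{y}(-1)^{x\cdot y}p(d-2|y|).
\]
We apply this with $p(s)=2(d-1)^{m/2}T_m\!\left(\frac{s}{2\sqrt{d-1}}\right)$. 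By Lemma~\ref{lemma:bm_from_cm}, this yields, for $m\ge1$,
\[
b_m(x)=\frac{(d-1)^{m/2}}{2^{d-1}}\sum_{j=0}^d\Bigl(\sum_{|y|=j}(-1)^{x\cdot y}\Bigr)T_m\!\left(\frac{d-2j}{2\sqrt{d-1}}\right).
\]

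The key step is identifying the inner character sum with $K_j(r;d)$ when $|x|=r$. Among the $j$-subsets $y$ of coordinates, let $\ell$ be the number lying in the support of $x$. There are $\binom{r}{\ell}\binom{d-r}{j-\ell}$ such $y$, and each contributes the sign $(-1)^{\ell}$. Summing over $\ell$ gives exactly the stated definition of $K_j(r;d)$, which proves \eqref{bm qd}. This counting, and making sure the normalisation $2^{-d}\cdot 2=2^{1-d}$ comes out right, is the only real point to check.

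For \eqref{Q_d_formula}, set $x=x_0$, so $r=0$ and $K_j(0;d)=\binom{d}{j}$. Since $Q_d$ is vertex-transitive, Corollary~\ref{cor:Nm_from_bm} applies with $q=d-1$ and adds the term $(q-1)\frac{1+(-1)^m}{2}=(d-2)\frac{1+(-1)^m}{2}$; this covers $m\ge2$. The case $m=1$ is not covered by Corollary~\ref{cor:Nm_from_bm} and must be checked directly. There the correction term vanishes, and the main sum is proportional to $\sum_j\binom{d}{j}(d-2j)=0$, so the formula gives $0$. This agrees with $N_1(x_0)=0$, since $Q_d$ has no loops, and the formula therefore holds for all $m\ge1$.
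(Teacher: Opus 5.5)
Your proposal is correct and follows essentially the same route as the paper: Lemma~\ref{lemma:bm_from_cm} combined with the character eigenbasis of $Q_d$, grouping by Hamming weight to obtain $K_j(r;d)$. The paper likewise then uses Corollary~\ref{cor:Nm_from_bm} with $q=d-1$ for $m\ge2$ and checks $m=1$ directly, where both sides vanish because $Q_d$ has no loops.
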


\begin{proof}
For
$\alpha=(\alpha_1,\ldots,\alpha_d)\in\{0,1\}^d$, define
\[
\psi_\alpha(x)=(-1)^{\alpha\cdot x},
\qquad
\alpha\cdot x=\alpha_1x_1+\cdots+\alpha_dx_d .
\]
Then the family $\{2^{-d/2}\psi_\alpha\}_{\alpha\in\{0,1\}^d}$ is an orthonormal basis of eigenfunctions of the adjacency operator on
$Q_d$. A direct computation shows that
\[
A\psi_\alpha=(d-2|\alpha|)\psi_\alpha,
\]
where $|\alpha|=\alpha_1+\cdots+\alpha_d$.
For $m\ge1$, by Lemma~\ref{lemma:bm_from_cm}
\[
b_m(x)
=
\left[
2(d-1)^{m/2}
T_m\!\left(\frac{A}{2\sqrt{d-1}}\right)\delta_{x_0}
\right](x).
\]
Using the spectral decomposition of $A$ with respect to the basis
$\{2^{-d/2}\psi_\alpha\}_{\alpha\in\{0,1\}^d}$, we obtain
\[
b_m(x)
=
\frac{(d-1)^{m/2}}{2^{d-1}}
\sum_{\alpha\in\{0,1\}^d}
T_m\!\left(
\frac{d-2|\alpha|}{2\sqrt{d-1}}
\right)
(-1)^{\alpha\cdot x}.
\]
Grouping the terms according to $j=|\alpha|$, we obtain
\[
b_m(x)
=
\frac{(d-1)^{m/2}}{2^{d-1}}
\sum_{j=0}^{d}
T_m\!\left(
\frac{d-2j}{2\sqrt{d-1}}
\right)
\sum_{|\alpha|=j}
(-1)^{\alpha\cdot x}.
\]
If $|x|=r$, then $\alpha\cdot x$ equals the number of
coordinates at which both $\alpha$ and $x$ are equal to $1$.
If this number is $\ell$, then there are
$\binom r\ell \binom{d-r}{j-\ell}$ such vectors $\alpha$, and in this case $(-1)^{\alpha\cdot x}=(-1)^\ell$. Therefore
\[
\sum_{|\alpha|=j}
(-1)^{\alpha\cdot x}
=
\sum_{\ell=0}^{j}
(-1)^\ell
\binom{r}{\ell}
\binom{d-r}{j-\ell}
=
K_j(r;d),
\] 
where $K_j(r;d)$ is the $j$-th Krawtchouk polynomial of the binary Hamming scheme (see, e.g., \cite[Section 3.2]{BI84}). This proves the formula for $b_m(x)$.

Taking $x=x_0$, we have $r=0$, and hence $K_j(0;d)=\binom{d}{j}$. 
Therefore
\[
b_m(x_0)
=
\frac{(d-1)^{m/2}}{2^{d-1}}
\sum_{j=0}^{d}
\binom{d}{j}
T_m\!\left(
\frac{d-2j}{2\sqrt{d-1}}
\right).
\]
Since $Q_d$ is vertex-transitive and $q=d-1$, Corollary~\ref{cor:Nm_from_bm}
gives the formula for $N_m(x_0)$ for $m\ge2$. For $m=1$, the right-hand
side of \eqref{Q_d_formula} is zero, which agrees with the fact that $Q_d$
has no loops. Hence \eqref{Q_d_formula} holds for all $m\ge1$.
\end{proof}

As an immediate consequence of the bipartite structure of the hypercube,
together with \eqref{Q_d_formula} and Corollary~\ref{cor:Nm_from_bm}, one obtains the following identity for Chebyshev polynomials.

\begin{corollary}
For every $d\ge2$ and every $m\ge0$, one has
\[
\sum_{j=0}^{d}
\binom{d}{j}
T_{2m+1}\!\left(
\frac{d-2j}{2\sqrt{d-1}}
\right)
=0.
\]
\end{corollary}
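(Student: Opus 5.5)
The plan is to read the identity off the second part of Corollary~\ref{cor:Nm_from_bm}, applied to $X=Q_d$ with $q=d-1$. First, $Q_d$ is bipartite: a vertex $x\in\{0,1\}^d$ is coloured by the parity of $|x|$, and every edge changes exactly one coordinate, so it joins vertices of opposite parity. Second, $Q_d$ is vertex-transitive, since for each $y$ the translation $x\mapsto x+y \pmod 2$ is a graph automorphism sending $0$ to $y$. Both hypotheses of Corollary~\ref{cor:Nm_from_bm} therefore hold, and it gives
\[
\operatorname{Tr} T_{2m+1}\!\left(\frac{A}{2\sqrt{d-1}}\right)=0 \qquad\text{for all } m\ge 0 .
\]

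Next I compute this trace from the spectrum of $A$. In the proof of Lemma~\ref{cor:Qd_bm_Nm}, the characters $\psi_\alpha$ were shown to be eigenfunctions with $A\psi_\alpha=(d-2|\alpha|)\psi_\alpha$. Hence $A$ has eigenvalue $d-2j$ with multiplicity $\binom{d}{j}$, and for any polynomial $p$ we get $\operatorname{Tr}p(A)=\sum_{j=0}^d\binom{d}{j}p(d-2j)$. Taking $p(s)=T_{2m+1}\bigl(s/(2\sqrt{d-1})\bigr)$ gives exactly the stated sum, which is therefore zero.

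Alternatively, I could argue from \eqref{Q_d_formula} directly. For odd length $2m+1\ge 1$ we have $N_{2m+1}(x_0)=0$, since $Q_d$ is bipartite and has no closed walks of odd length. The correction term $\tfrac{1+(-1)^{2m+1}}{2}(d-2)$ also vanishes, so the Chebyshev sum multiplied by $(d-1)^{(2m+1)/2}/2^{d-1}$ equals zero. For $d\ge2$ this prefactor is nonzero, so we may divide by it. Here $d-1\ge1$ keeps $2\sqrt{d-1}$ nonzero as well, so the arguments of $T_{2m+1}$ are well defined.

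As a sanity check independent of the graph, $T_{2m+1}$ is odd, and the substitution $j\mapsto d-j$ fixes $\binom{d}{j}$ while sending $d-2j$ to $-(d-2j)$. So the terms pair off and cancel, and I would mention this as a one-line direct confirmation. I do not expect any real obstacle. The only points needing care are that bipartiteness and vertex-transitivity are checked explicitly, and that $d\ge2$ is what justifies dividing by the nonzero prefactor.
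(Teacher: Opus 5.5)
Your proposal is correct and follows the paper's own argument. Bipartiteness of $Q_d$ forces $N_{2m+1}(x_0)=0$, and feeding this into Corollary~\ref{cor:Nm_from_bm} together with \eqref{Q_d_formula} gives the vanishing Chebyshev sum; your closing symmetry check ($j\mapsto d-j$ with $T_{2m+1}$ odd) is exactly the paper's subsequent remark.
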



\begin{remark}
The identity of the previous corollary can also be derived directly from
the spectrum of the hypercube. Indeed, the adjacency eigenvalues of $Q_d$
are $\theta_j=d-2j$, $j=0,1,\ldots,d$, with multiplicity $\binom{d}{j}$. Since
\[
\theta_{d-j}=-(d-2j)=-\theta_j \quad \text{and} \quad \binom{d}{d-j}=\binom{d}{j},
\]
the spectrum is symmetric with respect to the origin. Moreover,
$T_{2m+1}$ is an odd polynomial. Consequently, the terms corresponding to $j$ and $d-j$ cancel pairwise, which yields the identity.
\end{remark}

Using the formula for $b_m(x)$ in \eqref{eq. trace fla} and
grouping the spectral side by Hamming weights, we obtain the following
twisted Krawtchouk--Chebyshev identity.

\begin{corollary}\label{cor:Qd_twisted}
Let $d\ge2$ and let $1\le r\le d$. Then, for every $k\in\mathbb N$ and every real number $c$, one has
\begin{equation} \label{Qd_twisted}
    \sum_{j=0}^{d}
K_j(r;d)
\left(
1+\frac{2j-d}{2\sqrt{d-1}}\,c
\right)^k
=
2
\sum_{m=1}^{k}
(-1)^m
\sum_{s=0}^{d}
K_s(r;d)
T_m\!\left(
\frac{d-2s}{2\sqrt{d-1}}
\right)
I_m^{c}(k).
\end{equation}
\end{corollary}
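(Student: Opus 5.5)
Apply \eqref{eq. trace fla} of Theorem~\ref{th_trace_formula} to $Q_d$ with $x_0=(0,\ldots,0)$, $q=d-1$ and a vertex $x$ with $|x|=r\ge1$, and follow the same template as Corollaries~\ref{cor. trig sums gen} and~\ref{cor:Kn_c_identities}.

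\emph{Spectral side.} Use the orthonormal eigenbasis $\{2^{-d/2}\psi_\alpha\}$ from the proof of Lemma~\ref{cor:Qd_bm_Nm}. The function $\psi_\alpha$ has Laplacian eigenvalue $2|\alpha|$, so $\mu_\alpha=2b|\alpha|-a$. Since $\psi_\alpha(x_0)=1$, we get $\psi_\alpha(x_0)\overline{\psi_\alpha(x)}=2^{-d}(-1)^{\alpha\cdot x}$. Grouping by $j=|\alpha|$ and using the Krawtchouk computation from that proof, the left side of \eqref{eq. trace fla} becomes
\[
2^{-d}\sum_{j=0}^d K_j(r;d)\,(2bj-a)^k .
\]

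\emph{Geometric side.} Since $r\ge1$, $b_0(x)=\delta_{x=x_0}=0$, so the $I_0$ term drops. For $m\ge1$, substitute \eqref{bm qd} into the middle expression of \eqref{eq. trace fla}. The factor $q^{-m/2}=(d-1)^{-m/2}$ cancels $(d-1)^{m/2}$, and the right side becomes
\[
d_{a,b}^k\, 2^{-(d-1)}\sum_{m=1}^k(-1)^m\sum_{s=0}^d K_s(r;d)\,T_m\!\Big(\tfrac{d-2s}{2\sqrt{d-1}}\Big)I_m^{c_{a,b}}(k),
\]
with $d_{a,b}=bd-a$ and $c=c_{a,b}=2b\sqrt{d-1}/(bd-a)$. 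Multiplying both sides by $2^d/d_{a,b}^k$ gives the factor $2$ on the right of \eqref{Qd_twisted}.

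\emph{Reparametrization.} Write $2bj-a=(bd-a)+b(2j-d)$ and note $b/(bd-a)=c/(2\sqrt{d-1})$. Then
\[
(2bj-a)/d_{a,b}=1+\frac{(2j-d)c}{2\sqrt{d-1}},
\]
which turns the left side into exactly the left side of \eqref{Qd_twisted}. This requires $d_{a,b}\ne0$. That holds: for $b>0$, $a\le0$ gives $bd-a\ge bd>0$; for $b<0$, the edge condition gives $-a\ge|b|\lambda_{\max}=2d|b|$, so $bd-a\ge d|b|>0$.

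\emph{Extension to all real $c$.} Both sides of \eqref{Qd_twisted} are polynomials in $c$, by \eqref{eq:def-I-Bessel} and the binomial theorem. It suffices to show that $c_{a,b}$ takes infinitely many values as $(a,b)$ ranges over the admissible set. Fixing $b=1$ and letting $a\le0$ vary gives $c=2\sqrt{d-1}/(d-a)$, which sweeps out all of $(0,2\sqrt{d-1}/d]$. The identity theorem for polynomials then extends the identity to all real $c$, as in Corollary~\ref{cor. trig sums gen}.

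The computations are routine. The only delicate step is keeping the normalization constants consistent: $2^{-d}$ from the eigenfunctions, $2^{-(d-1)}$ from \eqref{bm qd}, and the cancellation of $q^{\pm m/2}$. That bookkeeping is what produces the factor $2$ on the right-hand side.
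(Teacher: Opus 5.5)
Your proposal is correct and follows essentially the same route as the paper. You apply \eqref{eq. trace fla} at a vertex with $|x|=r\ge1$, group the spectral side by Hamming weight to get $\sum_j K_j(r;d)(2bj-a)^k$, substitute \eqref{bm qd} using $b_0(x)=0$, divide by $(bd-a)^k$, reparametrize with $c=2b\sqrt{d-1}/(bd-a)$, and extend to all real $c$ by the polynomial identity argument; your explicit checks that $d_{a,b}>0$ under \eqref{eq:edge} and that $c_{a,b}$ sweeps out $(0,2\sqrt{d-1}/d]$ supply details the paper only asserts.
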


\begin{proof}
We want to apply \eqref{eq. trace fla} to the hypercube $Q_d$. Choose
$x\in V(Q_d)$ such that $|x|=r$.  For any
$\alpha\in\{0,1\}^d$, the corresponding eigenvalue of
$\Delta_{Q_d}^{a,b}$ is $\mu_\alpha=2b|\alpha|-a$. Therefore, after multiplying
by $2^d$, the spectral side of \eqref{eq. trace fla} becomes
\[
\sum_{\alpha\in\{0,1\}^d}
(2b|\alpha|-a)^k(-1)^{\alpha\cdot x}
=
\sum_{j=0}^{d}
(2bj-a)^kK_j(r;d),
\qquad k\ge0.
\]
Hence \eqref{eq. trace fla} yields 
\[
\sum_{j=0}^{d}
K_j(r;d)(2bj-a)^k
=
2^d(bd-a)^k
\sum_{m=0}^{k}
(-1)^m
b_m(x)
(d-1)^{-m/2}
I_m^{c_{a,b}(d)}(k),
\]
for all $a,\, b$ satisfying \eqref{eq:edge}, where $c_{a,b}(d)=\frac{2b\sqrt{d-1}}{bd-a}$. Since $r>0$, we have
$b_0(x)=0$. Substituting \eqref{bm qd} into the above identity and simplifying, we obtain
\[
\sum_{j=0}^{d}
K_j(r;d)(2bj-a)^k
=
2(bd-a)^k
\sum_{m=1}^{k}
(-1)^m
\sum_{s=0}^{d}
K_s(r;d)
T_m\!\left(
\frac{d-2s}{2\sqrt{d-1}}
\right)
I_m^{c_{a,b}(d)}(k).
\]
Dividing by $(bd-a)^k$ and introducing $c=\frac{2b\sqrt{d-1}}{bd-a}$, we obtain
\[
2bj-a
=
(bd-a)
\left(
1+\frac{2j-d}{2\sqrt{d-1}}\,c
\right),
\]
and therefore \eqref{Qd_twisted} holds true for all $c=\frac{2b\sqrt{d-1}}{bd-a}$ for which $a,\, b$ satisfy \eqref{eq:edge}. This range contains infinitely many $c$, and hence extension to all real $c$ follows from the fact that both sides of the identity  \eqref{Qd_twisted} are polynomials in $c$.
\end{proof}

\end{document}